\documentclass[11pt]{article}
\usepackage{amsmath,amsthm,amsfonts,amssymb,bm,wasysym}
\usepackage{amsxtra}
\usepackage{amstext}
\usepackage{hyperref}
\usepackage{verbatim}
\usepackage{float}
\usepackage{comment}
\usepackage[english]{babel}

\usepackage{cite}
\usepackage{hyperref}
\hypersetup{colorlinks=true, urlcolor= black, linkcolor=black, citecolor=blue}

\parskip \medskipamount
\newtheorem {lem} {Lemma} [section]
\newtheorem {prop} {Proposition} [section]
\newtheorem {theo} {Theorem} [section]

\newtheorem {rem} {Remark} [section]

\newtheorem {Hypo} {Hypothesis} [section]

\newcommand{\1}{{\text{\Large $\mathfrak 1$}}}

\title{Vertex reinforced branching random walks and generalized time-dependent P\'olya urns}

\author{Bruno Schapira
\footnote{Universit\'e Claude Bernard Lyon 1, Institut Camille Jordan, CNRS UMR 5208, 43 Boulevard du 11 novembre 1918, 69622 Villeurbanne Cedex, France;  schapira@math.univ-lyon1.fr}}

\begin{document}
\maketitle

\begin{abstract} We consider a class of infinite critical tree-indexed random walks on $\mathbb Z$, where the motion of particles is subject to vertex reinforcement. We mainly focus on the strong reinforcement regime, where we expect the process to localize almost surely on two sites. Part of our analysis includes the study of a time-dependent generalized P\'olya urn process, where the number of draws at each step is prescribed by a sequence $(\sigma_n)_{n\ge 1}$ of arbitrary positive integers, and the probability to pick a ball of a given color is proportional to a function of the {\it number} of balls of that color. In particular for bounded sequences $(\sigma_n)_{n\ge 1}$, we recover Rubin's characterization for the fixation of one color.   
%\newline
%\newline
%\emph{Keywords and phrases.} Self-interacting random walks; Vertex reinforced random walks, Branching random walks, P\'olya urn processes, strong reinforcement, localization, fixation.\\
%MSC 2010 \emph{subject classifications.} 60K35.
\end{abstract}

\section{Introduction} 
This paper is concerned with two models of random processes whose evolution is governed by a reinforcement mechanism. The first one is a generalization of the model of vertex reinforced random walks where instead of studying the motion of only one particle, we consider a family of particles that evolve simultaneously in the same landscape, and which split independently at each time step into a mean one number of new particles (and conditioned to survive forever). The second model is a generalized time-dependent P\'olya urn process, where the number of draws at each step is arbitrary and for each  draw the probability of choosing a ball of a given color is proportional to a function of the number of balls of that color in the urn.

These two models are related by the fact that the restriction of the first one to three sites is equivalent to the second one where the number of draws at each step is itself random and given by the number of individuals in odd generations of an infinite critical random tree.

\subsection{Vertex Reinforced Branching random walk on $\mathbb Z$}
Consider a non-degenerate critical Bienaym\'e-Galton-Watson (BGW) tree conditioned to survive forever, also called Kesten's tree, which is made of a semi-infinite line of vertices, called the spine, to which are attached independent (type of) critical BGW trees. Now consider a random walk indexed by such infinite tree, where for each $n\ge 1$, particles at generation $n$ independently jump to a nearest neighbor of their ancestor's position with probability proportional to a function (called here {\it weight function}) of the time spent on the target vertex in the past generations. We call this type of processes \textbf{Vertex reinforced branching random walks} (VRBRW), in analogy with the standard model of vertex reinforced random walk \cite{P,V}, where the only difference is that in the latter case the process is indexed by the set of non-negative integers, i.e.~a degenerate critical BGW tree. 

\vspace{0.2cm}
We mainly focus here on the strong reinforcement regime where the weight function is reciprocally summable, and where we expect that the VRBRW localizes almost surely on two sites (as for a standard VRRW). Our main result, Proposition~\ref{prop.strong} below, partly confirms this prediction, by showing that the process indeed localizes almost surely on a finite number of sites, and on two sites with positive probability.  

\vspace{0.2cm}
Note that while we only study vertex reinforcement in this paper, edge reinforcement could also be considered. However, in view of the standard models, it is expected to undergo a very different behavior, see e.g.~\cite{CT17,Dav,T2}, and as such we feel that it deserves a separate treatment. 
Note furthermore that while the case of critical offspring distribution is the one 
where we expect VRBRW to behave as in the standard setting without branching, the case of non-critical offspring distributions would also deserve further attention, see in particular~\cite{GS} for a recent work on a model of branching random walk on a triangle with edge reinforcement and supercritical offspring distribution.  
We also refer to e.g.~\cite{ER,GMR,PR} for other models of interacting vertex or edge reinforced random walks. 

\vspace{0.2cm}
Let us now define more precisely our model. 
We consider $\mu$ a probability measure on $\mathbb N$ with mean one and finite second moment, and let $\mathcal T_\infty$ denote the associated Kesten's tree (see Section~\ref{sec.Kesten} below for a precise definition). We also let $\mathcal T_n$ denote the restriction of this tree up to generation $n$, and let $\partial \mathcal T_n$ be the set of vertices at generation $n$. 
For a vertex $u\in \mathcal T_\infty$, we denote by $\mathcal V(u)$ the set of children of $u$. 
Furthermore, we consider a so-called \textbf{weight function}, which is a function $w:\mathbb N\to (0,\infty)$, that is assumed in the whole paper to be {\it non-decreasing}. We then define the VRBRW $(X_u)_{u\in \mathcal T_\infty}$ on $\mathbb Z$ as follows (note that this could in fact be defined as well on any locally finite graph). 
First the local times are defined classically as follows: for any $x\in \mathbb Z$, and any $n\in \mathbb N\cup \{\infty\}$,  
$$Z_n(x) := \sum_{u\in \mathcal T_n} \1\{X_u = x\}. $$ 
By convention we assume that $X_\emptyset = 0$, where we denote by $\emptyset$ the root of $\mathcal T$, so that the initial local time is given by 
$$Z_0(x) = \1\{x=0\},\quad \text{for }x\in \mathbb Z.$$ 
For $n\ge 0$, we let $\mathcal F_n = \sigma(\mathcal T_n, (X_u)_{u\in \mathcal T_n})$.    
Then for each $n$, any $u\in \partial \mathcal T_n$, any $v\in \mathcal V(u)$, and any $x\in \mathbb Z$, on the event $\{X_u = x\}$, 
\begin{equation}\label{def.process}
\mathbb P(X_v = x\pm 1 \mid  \mathcal F_n, \mathcal V(u)) = \frac{w(Z_n(x\pm 1))}{w(Z_n(x-1)) + w(Z_n(x+1))}.
\end{equation}
The main quantities of interest for us will be  
$$\mathcal R:= \{x \in \mathbb Z : Z_\infty(x) >0\} \quad \text{and}\quad \mathcal R':=\{x\in \mathbb Z : Z_\infty(x) = \infty\},$$
respectively the set of sites ever visited and those visited infinitely often. We shall say here that localization occurs, when $\mathcal R$ is a finite set. Regarding known results in the classical setting, we expect that when the sequence $(w(n))_{n\ge 0}$ is reciprocally summable, the process should localize almost surely on two sites, see~\cite{V,BSS}. Our main result regarding VRBRWs is a first step towards this goal.   

\begin{prop} \label{prop.strong}
If $\sum_{n\ge 0} \tfrac 1{w(n)}<\infty$, then, 
$$\mathbb P(|\mathcal R|<\infty) = 1 \quad \text{and}\quad \mathbb P(|\mathcal R'|=2)>0.$$ 
\end{prop}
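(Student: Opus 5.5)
The plan is to reduce everything to the behaviour of the process on three consecutive sites, where it becomes the generalized time-dependent P\'olya urn described in the introduction, and to combine this with a Borel--Cantelli type argument based on the summability of $1/w$.

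\textbf{Step 1: localization, $\mathbb P(|\mathcal R|<\infty)=1$.} Fix a site $x$ that has been visited, say first at generation $n_0$. Once $x$ carries a particle, each of that particle's children chooses between $x-1$ and $x+1$ according to \eqref{def.process}. The idea is that after some finite time the walk can only exit the current range through a jump onto an unvisited site $y$ with $Z_n(y)=0$. Such a jump competes against a neighbour $z$ whose weight $w(Z_n(z))$ grows with its local time. Along the spine, the local time of the occupied pair grows at least linearly in the number of generations. Summed over all particles of a generation, the conditional probability of any new site being discovered at generation $n+1$ is at most
\[
\sum_{u\in\partial\mathcal T_n}\frac{w(0)}{w(0)+w(Z_n(z_u))},
\]
where $z_u$ denotes the visited neighbour of $X_u$. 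I would control this by noting that the total local time $\sum_x Z_n(x)=|\mathcal T_n|$ grows, while the sizes $|\partial\mathcal T_n|$ are those of Kesten's tree, which grow only linearly in expectation by the finite second moment. The key point is to show that the local times at the boundary sites of the range diverge fast enough that this sum is finite almost surely; conditional Borel--Cantelli (L\'evy's extension) then gives that only finitely many new sites are discovered.

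I expect this to be the main obstacle. A boundary site might have small local time while many particles sit next to it, because generation sizes fluctuate. To handle this, I would use the fact that each particle at a boundary site $x$ that does not escape adds to $Z(x\mp1)$. So the number of escape opportunities is essentially bounded by the local time accumulated on the inner neighbour. With $\sum 1/w<\infty$ this yields a summable sequence, in the same spirit as Rubin's construction.

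\textbf{Step 2: $\mathbb P(|\mathcal R'|=2)>0$.} Here I would use the three-site reduction. Consider the event that the first generations stay on $\{-1,0,1\}$ with $0$ and $1$ heavily loaded. Restricted to sites $\{-1,0,1\}$, the particles at $0$ in odd generations draw between the colours $-1$ and $1$ with probability proportional to $w$ of the counts. This is exactly the urn with $\sigma_n$ equal to the number of individuals at generation $2n-1$. Applying the paper's fixation result for this urn conditionally on the tree gives that one colour fixes with positive probability. For this I need to check that its hypothesis on $(\sigma_n)$ holds almost surely for Kesten's tree, using the growth of generation sizes. On fixation onto $1$, particles at $1$ must also never jump to $2$. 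Given the huge local time at $0$, the total probability of such a jump is bounded by $\sum_n \sigma_n/w(Z_n(0))$, which can be made small starting from a favourable initial configuration reached with positive probability. Positivity of the intersection of these events then yields $|\mathcal R'|=2$ with positive probability.
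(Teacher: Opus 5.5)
Your Step~1 has a genuine gap. The plan is to show ``$\sum_n p_n<\infty$ a.s., where $p_n$ is the conditional probability that a new site is discovered at generation $n+1$'', and then invoke L\'evy's conditional Borel--Cantelli lemma. This is circular: by L\'evy's extension, the event $\{\sum_n p_n<\infty\}$ coincides a.s.\ with the event that only finitely many sites are ever discovered, so proving a.s.\ summability \emph{is} proving $|\mathcal R|<\infty$.

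The estimate you intend to feed into it is only a per-site bound. While $x$ is the rightmost visited site, the escape attempts from $x$ are controlled by the local time at $x-1$, and their total conditional probability is at most a constant times $\sum_k 1/w(k)$. This is bounded, but it is neither small nor decaying from one new site to the next: when $x$ is first reached, $x-1$ may itself be recent and carry small local time. Summing over an a priori unbounded number of successive boundary sites therefore gives nothing.

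The comparison of $|\partial\mathcal T_n|$ with $|\mathcal T_n|$ does not help either, since total local time may be spread over many sites. Likewise, ``the local time of the occupied pair grows linearly along the spine'' presupposes the localization you want.

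The missing idea, which is how the paper argues, is to turn the per-site bound into a \emph{uniform} lower bound. Each time a new rightmost site $x$ is first visited, conditionally on the past, the probability that $x+1$ is never visited should be at least some $c>0$ independent of the past. For instance, each jump probability is at most $1/2$, so $\prod_i(1-p_i)\ge \exp(-2\sum_i p_i)$; combine this with Jensen's inequality and the bound on the expected sum. Then the number of rightward extensions is dominated by a geometric variable, and symmetrically to the left.

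Two further remarks. First, the per-site bound must be made backward, not forward as you phrase it. Non-escaping children of particles at $x$ enter $Z_{n+1}(x-1)$, which is too late, since jumps at generation $n+1$ are governed by $Z_n(x-1)$. Moreover, offspring numbers are unbounded, so no pathwise comparison is available. What works is this: as long as $x+1$ is unvisited, the particles at $x$ at generation $n$ are children of the $\Delta Z_{n-1}(x-1)$ particles at $x-1$ at generation $n-1$.
\begin{itemize}
\item The conditional expected number of jump attempts out of $x$ at generation $n+1$ is at most $\Delta Z_{n-1}(x-1)+2\sigma^2$; the extra term comes from the size-biased spine.
\item Each attempt succeeds with probability at most $w(0)/w(Z_{n-1}(x-1))$.
\item Since $Z(x-1)$ strictly increases along these generations and $w$ is nondecreasing, the sum is at most $w(0)(1+2\sigma^2)\sum_k 1/w(k)$.
\end{itemize}

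Second, your Step~2 is workable and close to the paper, but the paper's route is shorter. You combine fixation of the three-site urn, a separate bound on exits to $\pm2$, and an intersection argument that also needs a coupling with the three-site process up to the exit time. The paper instead bounds directly, conditionally on the tree, the probability that every particle at $0$ jumps to $1$ and every particle at $1$ jumps to $0$. This is an explicit product, positive a.s.\ as in Propositions~\ref{prop:GPU}~(i) and~\ref{prop:VRBRW3sites} thanks to Lemma~\ref{lem:esp}, and it gives $\mathbb P(\mathcal R=\{0,1\})>0$ at once. Only this positive-probability statement is available here: the almost-sure fixation theorems of Section~\ref{sec:urns} need bounded drawing sequences or fast-growing weights.
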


The case of a non reciprocally summable weight sequence is expected to be much more involved, as regarding the classical setting, one expect localization on different patterns, or no localization, depending on the strength of the reinforcement, see~\cite{BSS,BSS2,CK,Sch12,Sch21,S,T1,V}. In our case, our understanding is rather fragmentary at the moment,  for we can only show that localization on two or three sites is not possible.  
\begin{prop}\label{prop.3sites}
If $\sum_{n\ge 0} \tfrac 1{w(n)}=\infty$, then $\mathbb P(|\mathcal R'|\in \{2,3\})=0$. 
\end{prop}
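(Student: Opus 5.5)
The plan is to treat the two cases $|\mathcal R'|=2$ and $|\mathcal R'|=3$ separately, each time arguing by contradiction with a Borel--Cantelli type argument. The key input is the non-summability of $1/w$ together with the monotonicity of $w$.

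\emph{Two sites.} Suppose $\mathcal R'=\{x,x+1\}$; a Kesten tree has infinitely many vertices, so some site is visited infinitely often, and $\mathcal R'$ is an interval. Then after some generation $n_0$ all particles sit on $\{x,x+1\}$, and $Z_n(x-1)$, $Z_n(x+2)$ freeze at finite values $a,b$. Every particle on $x$ with children has each child jump to $x-1$ with conditional probability at least $w(a)/(w(a)+w(Z_n(x+1)))\ge c/w(Z_n(x+1))$, with $c>0$. The heights $Z_n(x+1)$ grow at most by the number of particles in a generation, but each visit to $x+1$ is made by a distinct child of a particle on $x$. So I order the jumps $x\to x+1$ one by one, indexing by the local time $k$ of $x+1$ just before the jump. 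The probability of not escaping to $x-1$ at the jump raising $Z(x+1)$ from $k$ to $k+1$ is at most $1-c/(w(k)+c')$, with $w(k)$ taken at its frozen value at the start of the generation, which is monotonically at most $w(k)$.

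Using the conditional Borel--Cantelli lemma (L\'evy's extension), the sum $\sum_k 1/w(k)=\infty$ forces infinitely many jumps to $x-1$ almost surely on this event, a contradiction. The subtle point is that within a generation all children use the same frozen weights. I will handle this by a martingale/filtration refinement that reveals children one at a time. Since the weights are computed from $Z_n$ and not updated within a generation, the relevant probability is $\ge c/w(Z_n(x+1))\ge c/w(k)$ for each individual jump counted with index $k\ge Z_n(x+1)$, by monotonicity of $w$. So the bound goes in the right direction.

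\emph{Three sites.} Suppose $\mathcal R'=\{x-1,x,x+1\}$. After $n_0$, particles on $x-1$ must send all children to $x$ except finitely many going to $x-2$, and likewise for $x+1$. Here I use the connection with the generalized urn mentioned in the introduction: jumps from $x$ choose between $x-1$ and $x+1$ with weights $w(Z_n(x\pm1))$. The escape probability from $x-1$ to $x-2$ is $\ge c/w(Z_n(x))$. Each visit to $x$ (after $n_0$) comes from a particle on $x\pm1$, and that particle's children each face an escape chance $\ge c/w(Z_n(x))$ to $x-2$ or $x+2$, again evaluated on frozen counts. The same sequential Borel--Cantelli argument, indexing by the local time at $x$, gives $\sum_k c/w(k)=\infty$ escape opportunities. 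This yields infinitely many visits outside the triple, a contradiction.

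The main obstacle is making the Borel--Cantelli step rigorous when many particles move in one generation and local times jump by large amounts. The worry is whether every increment of $Z(x)$ (respectively $Z(x+1)$) is matched by an independent escape trial with probability $\gtrsim 1/w(\text{current count})$.

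In the two-site case the trial is directly the jump itself. In the three-site case, however, the increment of $Z(x)$ is made by a particle arriving at $x$, whereas the escape trials are made by particles sitting on $x\pm1$, one generation earlier. The correct accounting is this: a particle on $x\pm1$ at generation $n$ with $j$ children yields $j$ trials, each either escaping or incrementing $Z(x)$. I will enumerate these trials sequentially with the generation-$n$ count, which lies below the running count $k$. Monotonicity of $w$ then gives $\sum_k 1/w(k)$ as a lower bound for the cumulative conditional escape probabilities, which suffices by L\'evy's conditional Borel--Cantelli.
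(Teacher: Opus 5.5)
Your proof is correct, but it closes the key step differently from the paper. Both arguments rest on the same comparison, the paper's Lemma~\ref{lem:Ypm}: by monotonicity of $w$, $\sum_n \Delta Z_{n+1}(x)/w(Z_n(x))\ge \sum_{k\ge Z_{n_0}(x)} 1/w(k)=\infty$ on $\{Z_\infty(x)=\infty\}$.

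The paper reads this as divergence of $Y^+_\infty(x-1)$ or $Y^-_\infty(x+1)$. It then invokes the Tarr\`es-type martingale $M_n(x-1)=Y^+_n(x-1)-Y^-_n(x-1)$ of Lemma~\ref{lem:martingale}. This martingale cannot tend to $+\infty$, which forces $Y^-_\infty(x-1)=\infty$, i.e.\ infinitely many jumps to $x-2$.

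You instead pair each jump into the heavy site with an outward escape trial. Its conditional probability is at least of order $1/w(Z_n(\cdot))$, and L\'evy's conditional Borel--Cantelli lemma finishes. No martingale convergence/oscillation dichotomy is needed.

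Your route is more elementary and handles the within-generation issue explicitly. Per generation, the increments of $M_n$ are sums over arbitrarily many children, so they are not bounded. Your child-by-child filtration is exactly what makes the paper's appeal to a martingale with bounded increments rigorous.

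The paper's route gives a local statement not tied to a presupposed shape of $\mathcal R'$: on $\{Z_\infty(x)=\infty\}$, either $\{x-2,x-1\}$ or $\{x+1,x+2\}$ is contained in $\mathcal R'$. This disposes of both cases at once. Your argument gives the same conclusion, because freezing the boundary local times is unnecessary. By monotonicity, $\frac{w(Z_n(x-2))}{w(Z_n(x-2))+w(Z_n(x))}\ge \frac{w(0)}{2w(Z_n(x))}$ always holds. Infinitely many escapes from $x\pm1$ to $x\pm2$ then put both sites on one side into $\mathcal R'$.

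Your unproved preliminary claims are either immediate or not needed:
\begin{itemize}
\item $\mathcal R'$ is an interval because it has no isolated point: each particle of generation $\ge 1$ has its parent at a neighbouring site.
\item You never use that all particles eventually lie in $\mathcal R'$. You only use that the boundary sites receive finitely many particles, which follows directly from their finite local times.
\end{itemize}
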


\subsection{Generalized time-dependent P\'olya urn process} 
The second model we consider in this paper is a generalized time-dependent P\'olya urn process, where at each step one is allowed to pick more than one ball from the urn. This model is linked with the previous one by 
the fact that the restriction of a VRBRW to three sites is equivalent to an urn of this type (see Section~\ref{sec:VRBRW3sites}), where the number of draws at each step is itself random and given by the number of vertices in Kesten's tree at odd generations. 

\vspace{0,2cm}
Similar urn models have been already considered in the past~\cite{RT24,Si18a,Si18b}, but with a different rule for the probabilities of drawing balls; namely in these papers the probability of picking a ball of a specific color is proportional to some function (called feedback) of the {\em proportion} of balls of that color in the urn, while in our setting it will be proportional to a function of the {\em number} of balls of that color. The case with more than two colors and a general balanced replacement matrix has also been considered, see e.g.~\cite{KS,KM17,KMP,LMS,MS25} and references therein.  

\vspace{0,2cm}
We now define more precisely our model. Fix $w:\mathbb N \to (0,\infty)$ a non-decreasing function and $(\sigma_n)_{n\ge 1}$ some sequence of positive integers. 
Start with an urn with $R_0$ red balls and $B_0$ blue balls.  
For each $n\ge 1$, step $n$ consists in adding instantaneously to the urn $\sigma_n$ new balls, whose colors are chosen with probability proportional to the weight of the number of balls of each color 
in the urn, independently for each ball.  
In particular after $n$ steps the number of balls in the urn is equal to
$$\tau_n := R_0 + B_0 + \sigma_0+\dots + \sigma_n,$$ 
and if we denote by $R_n$ and $B_n$ respectively the number of red and blue balls after $n$ steps, then for each $n\ge 0$, conditionally on $R_n$, one has the equality in law
$$R_{n+1} = R_n + \mathcal B\Big( \sigma_{n+1}, \frac{w(R_n)}{w(R_n) + w(B_n)}\Big),$$
where we denote by $\mathcal B(N,p)$ a Binomial random variable with parameters $N\ge 1$ and $p\in [0,1]$. Moreover, for each $n\ge 0$, by definition $R_n + B_n = \tau_n$. 
We also define classically $R_\infty =\lim_{n\uparrow \infty} R_n$ and $B_\infty = \lim_{n\uparrow \infty} B_n$. 

\vspace{0.2cm}
A major question for this kind of processes is to obtain criteria for fixation of one color, i.e.~conditions on $w$ and $(\sigma_n)_{n\ge 1}$, ensuring that either $R_\infty$ or $B_\infty$ is finite almost surely. 
In the original setting where $\sigma_n = 1$ for all $n\ge 1$, 
the process was introduced and studied in~\cite{Dav}, where Rubin's algorithm is presented, and shows that fixation occurs with probability $1$ or $0$, depending on whether 
$\sum_{n\ge 0} \frac 1{w(n)}$ is finite 
or infinite. In another direction, when the weight function is of the form $w(n) = n^\alpha$, for some $\alpha>0$, then our 
process is equivalent to the one studied in~\cite{RT24}, where colors are chosen according to the weight (called feedback there) of the {\it proportion} of balls of that color. In particular when $\alpha>1$, we deduce from~\cite{RT24} that fixation occurs almost surely, if $\sum_{n\ge 1} \frac{\sigma_n}{\tau_n} = \infty$ and 
$\sum_{n\ge 1} (\frac{\sigma_n}{\tau_n})^2 <\infty$. Note that the observation that the two processes are the same for polynomial weights had already been observed and used in~\cite{BRS}. 
Let us stress however, that when $w(n)$ is not a fixed power of $n$, one cannot (a priori) rely on stochastic approximation techniques, as in~\cite{RT24}. 

\vspace{0.2cm}
Our main result extends Rubin's criterium for fixation to the case where $(\sigma_n)_{n\ge 0}$, is any bounded sequence. 

\begin{theo} \label{thm:GPU}
Assume that $(\sigma_n)_{n\ge 1}$ is bounded. Then 
$$\sum_{k\ge 1} \frac 1{w(k)}<\infty\ \Longleftrightarrow\ \mathbb P(R_\infty <\infty) >0\ \Longleftrightarrow\ \mathbb P(R_\infty <\infty\textrm{ or }B_\infty<\infty) = 1. $$ 
\end{theo}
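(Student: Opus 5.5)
We prove the chain of implications $(\mathrm{i})\Rightarrow(\mathrm{iii})\Rightarrow(\mathrm{ii})\Rightarrow(\mathrm{i})$, where (i) is $\sum 1/w(k)<\infty$, (ii) is $\mathbb P(R_\infty<\infty)>0$ and (iii) is fixation almost surely. The tool throughout is Rubin's exponential embedding. Attach to each color independent exponential clocks: red receives i.i.d.\ variables $\xi^R_k\sim\mathrm{Exp}(w(k))$, and blue receives $\xi^B_k\sim\mathrm{Exp}(w(k))$. In continuous time, red gains its $(k+1)$-th ball after an additional time $\xi^R_k$. When $\sigma_n\equiv1$ this reproduces the urn exactly, by the memorylessness of exponentials. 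For general $\sigma_n$ the step $n+1$ draws $\sigma_{n+1}$ balls with \emph{frozen} weights $w(R_n),w(B_n)$. So we cannot embed exactly. Instead we use a coupling in which the weights are updated only every $\sigma_{n+1}$ draws. Since $\sigma:=\sup\sigma_n<\infty$, the discrepancy per step is at most $\sigma$ balls. Because $w$ is non-decreasing, this lets us compare the urn with Rubin's construction up to bounded shifts of the index $k$.

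\textbf{(i)$\Rightarrow$(iii).} We will avoid an exact embedding and argue directly, by a Borel--Cantelli type estimate. Suppose both colors get infinitely many balls. Consider the successive times at which red is drawn. Each draw at which red has count $k$ and blue has count $\ge m$ competes with probability $\le w(k)/w(m)$. The natural argument is Rubin's: the total red time $T_R=\sum_k \xi^R_k$ is finite a.s. under (i), and the same holds for $T_B$. These have continuous laws and are independent, so $T_R\ne T_B$ a.s., and the color with the smaller total time wins. To adapt this to blocks, we realize step $n$ with $\sigma_{n+1}$ independent races between $\mathrm{Exp}(w(R_n))$ and $\mathrm{Exp}(w(B_n))$. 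Within a step, red's clocks are consumed with the rate still $w(R_n)\le w(R_n+j)$. We will construct the coupling so that red's elapsed time equals $\sum_{k<R_n}\xi^R_{k}\cdot(w(k)/w(k'))$, where $k'\in[k-\sigma,k]$. Comparing with the shifted series $\sum_k 1/w(k-\sigma)$, which is still finite, gives $T_R,T_B<\infty$ a.s. The argument then requires that these two totals differ. If they were equal, both colors would keep receiving balls with the same total clock. Continuity of the law of $T_R$ conditional on $T_B$ (one clock, say $\xi^R_0$, remains an independent exponential) rules this out. I expect this step to be the main obstacle. Making the block coupling a genuine equality in law is delicate, since the dependence within a block breaks the memoryless structure. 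If it fails, my fallback is the direct martingale/Borel--Cantelli estimate. We show that $\{$red leads by $\ge\sigma$ at some time with $R_n=k\}$ is followed by blue never being drawn again with probability $\ge \prod_{j}(1-\sigma w(B)/w(R_n+j))$. Under (i) this product is bounded below by something tending to $1$ as $k\to\infty$. We then apply Lévy's 0-1 law.

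\textbf{(iii)$\Rightarrow$(ii).} This is immediate by symmetry of the exchange of colors up to the initial condition. We also need the fact that from any configuration, fixation on blue has positive probability: conditioned to reach a configuration with $R_n=R_0$ and $B_n$ large, which happens with positive probability, blue fixation is the likely outcome.

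\textbf{(ii)$\Rightarrow$(i).} Suppose $\sum 1/w(k)=\infty$. On $\{R_\infty<\infty\}$ we have $w(R_n)\ge w(0)>0$ forever, while $B_n\le\tau_n\le R_0+B_0+\sigma n$. Hence the conditional probability that step $n+1$ adds no red ball is at most $1-w(0)/(w(0)+w(\tau_n))$. The probability that red is drawn at step $n+1$ is at least $c/w(\tau_n)$. Since $w$ is monotone and $\tau_n\le C n$, grouping gives $\sum_n 1/w(\tau_n)\ge \sigma^{-1}\sum_k 1/w(k)\cdot(\text{const})$. This yields $\sum_n 1/w(\tau_n)=\infty$, using the Cauchy-type bound $\sum_n 1/w(Cn)\ge C^{-1}\sum_{k}1/w(k+C)$. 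By Lévy's extension of Borel--Cantelli, red is then drawn infinitely often a.s. It follows that $\mathbb P(R_\infty<\infty)=0$, which contradicts (ii).
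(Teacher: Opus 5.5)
\paragraph{The gap is in (i)$\Rightarrow$(iii).} Your argument for $\neg$(i)$\Rightarrow\neg$(ii) is fine, and slightly more direct than going through Proposition~\ref{prop:GPU}~(ii): $\mathbb P(\text{red at step } n+1\mid\mathcal F_n)\ge w(0)/(2w(\tau_n))$, $\tau_n\le\tau_0+Kn$ and monotonicity give a divergent series, so conditional Borel--Cantelli applies.

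You have misidentified the obstacle. The frozen-rate construction is an \emph{exact} embedding. Run clocks of rates $w(R_n),w(B_n)$ until the $\sigma_{n+1}$-th ring; by memorylessness each ring is red with the right probability, independently (this is Lemma~\ref{lem:Rubin}). The real problem lies elsewhere. The index $k'$ in your formula for red's elapsed time is fixed by where the step boundaries fall, and these depend on the blue clocks. So red's time-line is neither a function of the red clocks alone nor monotone in them. On $\{R_\infty=B_\infty=\infty\}$ both time-lines explode at the common time $T_\infty$ by construction, so ``$T_R\neq T_B$'' merely restates the claim. Your continuity argument also fails: resampling $\xi^R_0$ moves the step boundaries and hence changes $T_B$ too, so $\xi^R_0$ is not independent of $T_B$.

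\paragraph{Your fallback rests on a false estimate.} Suppose red leads only by $\sigma$, i.e.\ $B=k-\sigma$. Take $w(m)=m^2$. As long as only red is drawn during the next $k$ draws, each of them is blue with probability at least $w(k-\sigma)/(w(k-\sigma)+w(2k+\sigma))\to 1/5$. So the probability that blue is \emph{never} drawn again is exponentially small in $k$; equivalently, $w(k-\sigma)\sum_{m\ge k}1/w(m)\asymp k\to\infty$. No lower bound tending to $1$, or even staying bounded away from $0$, is possible for that event.

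The missing idea is the paper's decoupling through a comparison of \emph{remaining times}:
\begin{enumerate}
\item At a time $T_n$ with $R_n\ge B_n$, force all draws of the next $K$ steps to be red. This has conditional probability $\ge 2^{-K^2}$ and creates a lead of at least $K$.
\item Afterwards, red's remaining time is at most $\sum_k\xi^R_k/w(k-K)$. On $\{B_\infty=\infty\}$, blue's remaining time is at least $\sum_k\xi^B_k/w(k-1)$. Both sums run over fresh clocks with deterministic weights, and they are independent of each other and of the forcing event.
\item If the red bound is the smaller one, blue can be drawn only \emph{finitely} often.
\item Thanks to the lead, the red bound is stochastically dominated by the blue one, so this happens with probability $\ge 1/2$.
\end{enumerate}
This gives $\mathbb P(R_\infty<\infty\text{ or }B_\infty<\infty\mid\mathcal F_{T_n})\ge 2^{-K^2-1}$ uniformly, and L\'evy's 0-1 law concludes. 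The bound is uniform whatever the growth of $w$, which your ``never again'' event cannot provide.

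\paragraph{A smaller point.} (iii)$\Rightarrow$(ii) is not ``immediate by symmetry'' when $R_0\neq B_0$. Your repair (from $R_n=R_0$ with $B_n$ large, red is never drawn again with positive probability) needs $\sum_n\sigma_{n+1}/w(\tau_n-R_0)<\infty$, which is (i). This is easily fixed. Your divergence argument actually shows $\neg$(i)$\Rightarrow R_\infty=B_\infty=\infty$ a.s., hence (iii)$\Rightarrow$(i). Then (i)$\Rightarrow$(ii) follows from the explicit product in Proposition~\ref{prop:GPU}~(i).
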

The proof is based on a continuous time embedding of the process, which extends Rubin's algorithm to arbitrary sequences $(\sigma_n)_{n\ge 1}$, see Section~\ref{sec:GPU2}. The difficulty here is that the time-lines associated to each color are not independent, and even worst they are not monotone functions of the individual time clocks associated to each ball. However, when $(\sigma_n)_{n\ge 1}$ is bounded, and $w$ is reciprocally summable, one can still observe that the total time of the process is finite and cannot be the same for both colors, almost surely, which proves that fixation occurs. 

\vspace{0.2cm}
In another direction, we also show that if $w$ grows sufficiently fast, e.g.~stretched exponentially fast, and $(\sigma_n)_{n\ge 1}$ at least polynomially fast, with a sufficiently large exponent, then fixation occurs almost surely. More precisely, we prove the following. 

\begin{theo}\label{thm:GPUexp}
Assume that $w(n) = \exp(cn^\alpha)$, and $\sigma_n = n^\beta$, for some positive constants $c$, $\alpha$ and $\beta> \frac{1-\alpha}{\alpha}$. Then 
$$  \mathbb P(R_\infty <\infty\textrm{ or }B_\infty<\infty) = 1. $$ 
\end{theo}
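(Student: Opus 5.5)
The plan is to track the gap $D_n := R_n - B_n$ and show two things: $|D_n|$ almost surely reaches a threshold $M_n$ well above the natural scale $\tau_n^{1-\alpha}$, and once it is above $M_n$, with probability close to one the minority color never receives another ball. Throughout, $\tau_n \asymp n^{\beta+1}$. The hypothesis $\beta>\frac{1-\alpha}{\alpha}$ is equivalent to $(1-\alpha)(\beta+1)<\beta$, i.e. $\sigma_n \gg \tau_n^{1-\alpha}$; this is what makes both steps work.

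\emph{Key estimate.} Since $x\mapsto x^\alpha$ is concave, if $R_n>B_n$ then
$$R_n^\alpha - B_n^\alpha \ \ge\ \alpha R_n^{\alpha-1}(R_n-B_n)\ \ge\ \alpha \tau_n^{\alpha-1} D_n.$$
Hence the probability of drawing the minority color at step $n+1$ is at most $\exp(-c\alpha \tau_n^{\alpha-1}|D_n|)$, and symmetrically when $B_n>R_n$.

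\emph{Step 2 (absorption).} Fix $\varepsilon>0$ small and set $M_n := \tau_n^{1-\alpha} n^{\varepsilon}$. By the key estimate, on $\{|D_n|\ge M_n\}$ the expected number of minority balls added at step $n+1$ is at most $\sigma_{n+1} e^{-c\alpha n^{\varepsilon}}$, which is summable. On the event that no minority ball is added, $|D_{n+1}| = |D_n| + \sigma_{n+1} \ge M_{n+1}$. This holds because $\sigma_{n+1}\gg M_{n+1}-M_n$, and even $\sigma_{n+1} \gg M_{n+1}$ when $\varepsilon$ is small. By induction and a union bound,
$$\mathbb P\big(\text{no minority ball after time } n \,\big|\, \mathcal F_n\big) \ \ge\ 1-\sum_{k\ge n}\sigma_{k+1}e^{-c\alpha k^{\varepsilon}} \quad \text{on } \{|D_n|\ge M_n\},$$
where $\mathcal F_n$ denotes the urn's natural filtration. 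The right-hand side tends to $1$.

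\emph{Step 1 (escape from the middle region).} I would show that from any configuration at time $n$, the probability that $|D_m|\ge M_m$ for some $m\in\{n+1,n+2\}$ is bounded below by some $\delta>0$ uniform in $n$. I expect this to be the main obstacle, and I would handle it in two sub-steps. First, the Binomial$(\sigma_{n+1},p)$ draw has standard deviation of order $\sqrt{\sigma_{n+1}}$ when $p$ is near $1/2$. So with probability bounded below, $|D_{n+1}|\ge n^{\beta/2}$, or $|D_{n+1}|$ is already large if $p$ is far from $1/2$. Second, from a gap $D$ with $1\ll |D|\lesssim \tau_n^{1-\alpha}$, the drift is
$$\mathbb E[D_{n+1}-D_n\mid \mathcal F_n]=\sigma_{n+1}(2p-1)\approx \tfrac{c\alpha}{2}\sigma_{n+1}\tau_n^{\alpha-1}D,$$
and the factor $\sigma_{n+1}\tau_n^{\alpha-1}\asymp n^{\beta-(1-\alpha)(\beta+1)}\to\infty$. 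So the gap is multiplied by a diverging factor while fluctuations stay of order $\sqrt{\sigma}$. Once $|D|\gtrsim\tau^{1-\alpha}$, $p$ is bounded away from $1/2$ and one more step adds a gap of order $\sigma_{n+1}\gg M_{n+1}$. The only care needed is to combine the two sub-steps with Chebyshev's inequality over $O(1)$ steps, keeping the lower bound $\delta$ uniform.

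\emph{Conclusion.} Let $A$ be the event that $|D_n|\ge M_n$ and no minority ball is added after time $n$, for some $n$; this event implies fixation. Steps 1 and 2 together give $\mathbb P(A\mid\mathcal F_n)\ge \delta/2$ for all large $n$, almost surely. By L\'evy's $0$--$1$ law, $\mathbb P(A\mid\mathcal F_n)\to \1_A$ almost surely, so $\mathbb P(A)=1$, which is the statement $\mathbb P(R_\infty<\infty \textrm{ or } B_\infty<\infty)=1$.
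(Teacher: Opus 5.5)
Your argument is correct, and it takes a genuinely different route from the paper after the escape step.

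\textbf{Shared escape step.} Your Step 1 uses the same mechanism as the paper. The paper takes a CLT fluctuation of order $\sqrt{\sigma_n}$ and amplifies it over a fixed number $k$ of steps, where $k$ is the smallest integer with $k\gamma+\beta/2\ge\beta$ and $\gamma=\beta-(1+\beta)(1-\alpha)$, until the gap is of order $\sigma_n$.

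\textbf{Where the routes split.} From there the paper works with ratios rather than gaps:
\begin{itemize}
\item it gets $\liminf R_n/B_n\ge 1+\varepsilon$ (or the reverse) as in Lemma~\ref{lem:loc1};
\item it bootstraps this to $R_n/B_n\to\infty$ by the induction of Lemma~\ref{lem:loc2}, using Hypothesis~\ref{hypo}~$(iii)$;
\item it rules out non-fixation with the modified continuous-time Rubin embedding, comparing the red time-line $Z_k^+(\varepsilon)$ with the blue one $Z_k^-(\varepsilon)$ via Hypothesis~\ref{hypo}~$(iv)$ and Markov/Paley--Zygmund.
\end{itemize}
Your Step 2 replaces all of this with a direct union bound. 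Above the threshold $M_n=\tau_n^{1-\alpha}n^{\varepsilon}$, each draw picks the minority colour with probability at most $e^{-c\alpha n^{\varepsilon}}$, which beats the polynomial $\sigma_{n+1}$. So absorption has conditional probability tending to one, and L\'evy's $0$--$1$ law finishes.

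\textbf{What each route buys.} Yours is shorter and fully elementary. It avoids the embedding, and it shows that fixation essentially happens as soon as the gap crosses $M_n$. The price is that it uses the explicit stretched-exponential $w$ and polynomial $\sigma_n$ in an essential way. The paper's ratio-plus-embedding absorption argument only needs conditions $(iii)$--$(iv)$, which also hold for polynomial weights. There, a large ratio $K$ only gives a per-draw minority probability of order $K^{-\alpha}$, and a union bound like yours would fail. So in the paper's modular structure, only the escape step is weight-specific.

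\textbf{Two corrections.}
\begin{itemize}
\item Step 1 as stated, escape at some $m\in\{n+1,n+2\}$, is false in general.
  \begin{itemize}
  \item After the first step the gap is of order $n^{\beta/2}$. The drawing probability $p$ is bounded away from $1/2$ only once the gap reaches order $\tau_n^{1-\alpha}\asymp n^{(1-\alpha)(\beta+1)}$.
  \item Each step multiplies the gap by a factor $\asymp n^{\gamma}$. You therefore need about $\big((1-\alpha)(\beta+1)-\beta/2\big)/\gamma$ steps, which blows up as $\beta\downarrow(1-\alpha)/\alpha$.
  \item Your own sub-step argument handles this with a fixed number $k=k(\alpha,\beta)$ of steps. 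Uniformity of $\delta$ survives, because only the first step costs a constant and the later ones succeed with probability $1-O(n^{-\gamma})$ by Chebyshev.
  \item To make the drift bound rigorous, use $2p-1=\tanh\big(c(R_n^\alpha-B_n^\alpha)/2\big)\ge \tanh(1)\min\big(c\alpha\tau_n^{\alpha-1}|D_n|/2,\,1\big)$ instead of ``$\approx$''.
  \end{itemize}
\item The concavity estimate needs $\alpha\le 1$. For $\alpha>1$, use $R^\alpha-B^\alpha\ge R^{\alpha-1}(R-B)\ge(\tau_n/2)^{\alpha-1}D_n$ when $R>B$; this only changes constants.
\end{itemize}
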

In fact we prove that almost sure fixation holds under some more general conditions, see our Hypotheses~\ref{hypo} and~\ref{hypo2}, and the corresponding Theorems~\ref{theo.GPUexpbis} and~\ref{thm.loc.bis}.  
However, it remains a challenge to extend these results to more general weight functions $w$ and sequences $(\sigma_n)_{n\ge 1}$, notably for $w$ growing only polynomially fast. 
Actually, as we observe in Remark~\ref{Rem:weak01}, there are some reasons to believe that localization should occur almost surely as soon as 
$$\sum_{n\ge 0} \frac{\sigma_{n+1}}{w(\lceil \tau_n/2\rceil)} <\infty,$$
since under this condition, for the continuous-time embedding of the process mentioned above, the time-line is almost surely finite.

As an application we prove that if for instance $w(n) = \exp(cn)$, for some constant $c>0$, then a VRBRW restricted to three sites localizes almost surely on two sites, see Proposition~\ref{prop:appVRBRW}. 
It remains a challenge to extend this result to the VRBRW on $\mathbb Z$, even for such very strong reinforcement. 

\vspace{0.2cm}
\textbf{Organization:} In the next section we collect some general facts about Kesten's tree. Section~\ref{sec:3sites} is devoted to the proof of Proposition~\ref{prop.3sites}, while Section~\ref{sec:urns} is devoted to the analysis of our urn model, and the application to the VRBRW restricted to three sites. 
Finally in Section~\ref{sec:propstrong} we give a proof of Proposition~\ref{prop.strong}. 

%%%%%%%%%%%%%%%%%%%%%%%%%%%%%%%%%%%%%%%%%%%%%%%%%%%%%%%%%%%%%%%%%%%%

\section{Preliminaries on Kesten's tree}\label{sec.Kesten}
Consider $\mu$ a probability measure on $\mathbb N$ with mean one, and variance $\sigma^2= \sum_{i\ge 1} i(i-1)\mu(i)\in (0,\infty)$. Then we define Kesten's tree $\mathcal T_\infty$ as follows. First, the vertices of this tree are of two types, either normal or special. Normal vertices produce a number of offspring according to the distribution $\mu$, while special vertices produce a number of offspring according to the size biased distribution $\widetilde \mu$, defined by $\widetilde \mu(i ) = i\mu(i)$. Among the children of special vertices, one is chosen uniformly at random to be special, while others (if they exist) are normal. Moreover, the root is a special vertex. Hence the set of special vertices forms an infinite line in $\mathcal T_\infty$, which is also called the spine.

For $u\in \mathcal T_\infty$, we let $|u|$ be the generation of $u$. Recall that we definte $\mathcal T_n = \{u\in \mathcal T_\infty : |u|\le n\}$ and $\partial \mathcal T_n = \{u\in \mathcal T_\infty : |u|=  n\}$. We further set 
$$\mathcal Z_n= |\partial \mathcal T_n|,$$  
the number of vertices of the tree at generation $n$.  In particular by definition, one has $\mathcal Z_0 = 1$, since the root of the tree is the only vertex at generation $0$. We shall need the following elementary and standard results. 
\begin{lem}\label{lem:esp}
One has for any $1\le m \le n$, 
$$ \mathbb E[\mathcal Z_n\mid \mathcal Z_m] = \mathcal Z_m +  (n-m)\sigma^2. $$
\end{lem}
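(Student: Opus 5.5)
By the Markov property of Kesten's tree in the generation variable, it suffices to compute $\mathbb E[\mathcal Z_{k+1}\mid \mathcal Z_k]$ for a single step and then iterate. I will show that
$$\mathbb E[\mathcal Z_{k+1}\mid \mathcal F'_k] = \mathcal Z_k + \sigma^2,$$
where $\mathcal F'_k$ is the $\sigma$-field generated by the first $k$ generations together with their types. Summing these one-step increments from $k=m$ to $k=n-1$ and using the tower property then gives $\mathbb E[\mathcal Z_n\mid \mathcal F'_m] = \mathcal Z_m + (n-m)\sigma^2$. The right-hand side is $\sigma(\mathcal Z_m)$-measurable, so a further conditioning on $\mathcal Z_m$ yields the stated identity.

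For the one-step computation, note that generation $k$ contains exactly one special vertex, the spine vertex, and $\mathcal Z_k-1$ normal vertices. Conditionally on $\mathcal F'_k$, each normal vertex has $\mu$-distributed offspring with mean $1$. The special vertex has $\widetilde\mu$-distributed offspring, with mean
$$\sum_i i\cdot i\mu(i) = \sum_i i(i-1)\mu(i) + \sum_i i\mu(i) = \sigma^2+1.$$
Hence
$$\mathbb E[\mathcal Z_{k+1}\mid\mathcal F'_k] = (\mathcal Z_k-1)\cdot 1 + (\sigma^2+1) = \mathcal Z_k+\sigma^2.$$
Integrability is not an issue, because $\mathbb E[\mathcal Z_k] = 1+k\sigma^2<\infty$, as follows by induction from the same one-step identity.

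The only point requiring care is the conditioning. The process $(\mathcal Z_k)$ by itself is Markov, because the number of special vertices in each generation is always exactly one; but to be safe I condition on the richer filtration $\mathcal F'_k$ and only project onto $\mathcal Z_m$ at the end. This is routine, and I expect no real obstacle.
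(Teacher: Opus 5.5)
Your proposal is correct and follows the paper's argument: exactly one special vertex per generation, with offspring mean $\sum_i i^2\mu(i)=1+\sigma^2$, and $\mathcal Z_k-1$ normal vertices with mean one give the one-step identity, which you iterate by induction. Conditioning on the richer filtration $\mathcal F'_k$ and then projecting onto $\sigma(\mathcal Z_m)$ makes rigorous the step the paper calls ``immediate by induction.''
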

\begin{proof}
The result is immediate by induction, using that at each generation there is exactly one special vertex, whose offspring distribution has mean $1+\sigma^2$, and all other vertices have a number of offspring with mean one. 
\end{proof}

Another result we shall need is the following one, interesting on its own. Note that we just assume that $\mu$ has a finite second moment here.  
This result will only be used for proving Proposition~\ref{prop:appVRBRW}, hence its proof can be skipped at first reading. 

\begin{lem}\label{lem:ratiolim}
One has almost surely, 
$$\lim_{n\to \infty} \frac{\mathcal Z_n}{|\mathcal T_n|} = 0. $$
\end{lem}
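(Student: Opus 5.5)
We need to show that $\mathcal Z_n/|\mathcal T_n|\to 0$ almost surely, where $|\mathcal T_n|=\sum_{k\le n}\mathcal Z_k$. The plan is to use the spine decomposition: $\mathcal Z_n=1+\sum_{k<n} Y_k(n-k-1)$. Here $Y_k(j)$ is the number of descendants at relative generation $j$ of the normal children of the $k$-th spine vertex. These are the populations at time $j$ of independent critical BGW processes started from $\widetilde\mu$-minus-one individuals.

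The key heuristic is that $\mathcal Z_n$ is typically of order $n$ but fluctuates: it is comparable to $n$ times a random variable with a continuous law (the size-biased Feller diffusion). Meanwhile $|\mathcal T_n|$ is of order $n^2$. So the ratio is of order $1/n$ in probability. Almost sure convergence needs control of the rare times where $\mathcal Z_n$ is atypically large compared to the sum.

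\textbf{Step 1: upper bound on the maximum.} I would bound $\max_{k\le n}\mathcal Z_k$ almost surely. By Lemma~\ref{lem:esp}, $\mathcal Z_n-n\sigma^2$ is a martingale, so Doob's inequality along dyadic blocks gives $\mathbb P(\max_{k\le 2^{j}}\mathcal Z_k\ge 2^{j} j^2)\le C/j^2$. This requires only $\mathbb E[\mathcal Z_n]\le 1+n\sigma^2$, via Doob's maximal inequality for the nonnegative submartingale $\mathcal Z_n$. Borel--Cantelli then gives $\max_{k\le n}\mathcal Z_k\le n(\log n)^2$ eventually.

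\textbf{Step 2: lower bound on the total size.} This is the main obstacle. We need $|\mathcal T_n|\gg n(\log n)^2$ almost surely, say $|\mathcal T_n|\ge n^{3/2}$ eventually. The total size dominates $\sum_{k\le n/2}(\text{total progeny up to generation } n \text{ of the subtree hanging at spine vertex } k)$, which is a sum of i.i.d.-like contributions. A cleaner route: $|\mathcal T_n|\ge \sum_{k\le n}1_{\{\text{spine vertex }k \text{ has a normal child}\}}\,S_k$, where $S_k$ is the size of that child's subtree truncated at depth $n/2$. The $S_k$ are independent, and for a critical finite-variance tree $\mathbb P(S\ge m)\asymp m^{-1/2}$, as long as the depth $n/2$ exceeds $\sqrt m$. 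Among the $\asymp n$ independent trials, with overwhelming probability (stretched-exponentially small failure) at least one has $S_k\ge n^{2-\varepsilon}$. Concretely, $\mathbb P(\text{none})\le(1-cn^{-1+\varepsilon/2})^{cn}\le e^{-cn^{\varepsilon/2}}$. This is summable, so Borel--Cantelli (applied along all $n$, using monotonicity if needed) gives $|\mathcal T_n|\ge n^{2-\varepsilon}$ eventually. The nondegeneracy $\sigma^2>0$ ensures $\widetilde\mu(\{i\ge2\})>0$.

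\textbf{Step 3: conclusion.} Combining the two steps, almost surely eventually
$$\frac{\mathcal Z_n}{|\mathcal T_n|}\le \frac{n(\log n)^2}{n^{2-\varepsilon}}\to0.$$
The delicate point is the tail estimate $\mathbb P(S\ge m)\ge c\,m^{-1/2}$ for the depth-truncated total progeny. I would derive it from Kolmogorov's estimate $\mathbb P(\text{height}\ge h)\sim 2/(\sigma^2 h)$ together with the conditional second-moment bound: given survival to height $h$, the generation sizes are of order $h$ over $h$ generations. Paley--Zygmund then gives total size $\ge c h^2$ with positive conditional probability. Taking $h=\sqrt m$ yields $\mathbb P(S\ge m)\ge c/\sqrt m$, which is ample.
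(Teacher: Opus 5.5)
Your proof is correct, and it differs from the paper's in both halves.

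\textbf{Upper bound.} You apply Doob's maximal inequality to the nonnegative submartingale $\mathcal Z_n$ (with $\mathbb E[\mathcal Z_n]=1+n\sigma^2$) on dyadic blocks. This needs only first moments and gives $\mathcal Z_n\le Cn(\log n)^2$ eventually. The paper instead conditions on the spine offspring numbers $\mathcal X$ and bounds $\text{Var}(\mathcal Z_n\mid\mathcal X)\le n\sigma^2\sum_{k<n}X_k$. It then uses Chebyshev and Borel--Cantelli at every $n$, and obtains only $\mathcal Z_n\le 2n^{3/2}\log n$.

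\textbf{Lower bound.} The paper stays within second-moment methods. It splits the spine into blocks of length $N=\lfloor n^{4/5}\rfloor$ and applies Paley--Zygmund to each block's hanging mass, conditionally on $\mathcal X$. This uses only the mean and the $O(\text{depth}^3)$ variance of truncated subtree sizes, and gives success probability at least $c/j$ for the $j$-th block. That yields only a polynomial failure bound, so the paper must run Borel--Cantelli along a subsequence $n^\beta$ and use monotonicity of $|\mathcal T_n|$, ending with $|\mathcal T_n|\ge cn^{8/5}$. Your single-big-subtree argument instead gives a stretched-exponential failure bound. Borel--Cantelli then works along all $n$, and you get the nearly optimal $|\mathcal T_n|\ge n^{2-\varepsilon}$. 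The price is importing Kolmogorov's survival estimate, in its finite-variance form due to Kesten--Ney--Spitzer.

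When writing this up, two details are worth noting:
\begin{itemize}
\item You genuinely need the upper bound $\mathbb P(Z_h>0)\le C/h$, since that is what makes the conditional mean of order $h^2$. The lower bound $\mathbb P(Z_h>0)\ge 1/(1+h\sigma^2)$ comes free from Cauchy--Schwarz.
\item The conditional Paley--Zygmund step is cleanest with $Y=\sum_{j=h}^{2h}Z_j$. Since $Y$ vanishes off $\{Z_h>0\}$, you get $\mathbb E[Y\mid Z_h>0]=(h+1)/\mathbb P(Z_h>0)\asymp h^2$ and $\mathbb E[Y^2\mid Z_h>0]\le Ch^3/\mathbb P(Z_h>0)\asymp h^4$. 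This gives $\mathbb P(Y\ge ch^2)\ge c'/h$; taking $h\asymp\sqrt m$ fits inside the available depth $n/2$.
\end{itemize}
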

\begin{proof}
For $n\ge 0$, denote by $X_n$ the number of normal children of the $n$-th vertex on the spine. In particular $\mathbb E[X_n] = \sigma^2$, for all $n\ge 0$, and let $\mathcal X = \sigma((X_n)_{n\ge 0})$, denote the sigma-field generated by all these variables. For $k\le n-1$, denote by $\mathcal Z^{(k)}_{n-k}$ the number of 
descendent at generation $n$, of one of the normal children of the $k$-th vertex on the spine. In particular $\mathbb E[\mathcal Z^{(k)}_{n-k} \mid X_k] = X_k$, for all $k\le n-1$. Recall furthermore, that if $\widetilde {\mathcal Z}_i$ denote the number of individuals at $i$-th generation in a BGW tree with offspring distribution $\mu$, then by an immediate induction, it can be shown that $\text{Var}( \widetilde {\mathcal Z}_i) = i\sigma^2$, for all $i\ge 0$. Thus, by observing that conditionally on $X_k$, 
$\mathcal Z^{(k)}_{n-k}$ is a sum of $X_k$ independent random variables distributed as $ \widetilde {\mathcal Z}_{n-k-1}$, one can deduce that
$$\text{Var}(\mathcal Z^{(k)}_{n-k}\mid X_k) = (n-k-1)\sigma^2 X_k.$$
Since by definition, 
$$\mathcal Z_n = 1+ \sum_{k=0}^{n-1} \mathcal Z^{(k)}_{n-k}, $$ 
we deduce that 
$$\mathbb E[\mathcal Z_n\mid \mathcal X] = 1+ \sum_{k=0}^{n-1} X_k, \quad \text{and}\quad \text{Var}(\mathcal Z_n\mid \mathcal X) \le n \sigma^2\sum_{k=0}^{n-1}X_k. $$ 
Chebychev's inequality then gives 
$$\mathbb P\Big(\mathcal Z_n \ge 1+ \big(\sum_{k=0}^{n-1} X_k\big) + n^{3/2} \log n\ \Big|\ \mathcal X \Big)\le \frac{n \sigma^2 \sum_{k=0}^{n-1} X_k}{n^3(\log n)^2}. $$ 
Now the strong law of large numbers entails that almost surely, as $n\to \infty$, 
$$\frac 1n \sum_{k=0}^{n-1} X_k \to \sigma^2,$$
which implies that almost surely $\sum_{k=0}^{n-1} X_k \le 2n\sigma^2$, for all $n$ large enough. 
Hence almost surely, 
$$\sum_{n\ge 0} \mathbb P\Big(\mathcal Z_n \ge 1+ \big(\sum_{k=0}^{n-1} X_k\big) + n^{3/2}\log n\ \Big|\ \mathcal X \Big)<\infty, $$ 
which in turn implies using Borel-Cantelli lemma, that almost surely $\mathcal Z_n \le 2n^{3/2}\log n$, for all $n$ large enough.

We now proceed with a lower bound for $|\mathcal T_n|$. For every $0\le k\le n-1$, and any $1\le i\le X_k$, denote by $\mathcal T^{(k,i)}_n$ the tree of descendent of the $i$-th
normal child of the $k$-th vertex on the spine, up to generation $n$, so that by adding the $(n+1)$ vertices on the spine we obtain
$$|\mathcal T_n| =n+1+ \sum_{k=0}^{n-1} \sum_{i=1}^{X_k} |\mathcal T^{(k,i)}_n|. $$ 
Let $N = \lfloor n^{4/5}\rfloor$,  and for $1\le j\le n^{1/5}$,  
$$U_j = \sum_{k = jN}^{(j+1)N - 1}  \sum_{i = 1}^{X_k}  |\mathcal T^{(n-k,i)}_n|. $$ 
Note first that, 
$$\mathbb E[U_j\mid \mathcal X] =  \sum_{k = jN}^{(j+1)N - 1} kX_k\ge jN\sum_{k=jN}^{(j+1)N - 1} X_k.$$
Furthermore, a straightforward computation shows that $\text{Var}(|\mathcal T^{(k,i)}_n|) \le C(n-k)^3$, for some constant $C>0$, which entails 
$$\text{Var}(U_j \mid \mathcal X) \le C(j+1)^3N^3 \sum_{k = jN}^{(j+1)N - 1} X_k. $$  
Hence on the event $\big\{ \sum_{k = jN}^{(j+1)N - 1} X_k \ge N \sigma^2 / 2\big\}$, we obtain using Paley-Zygmund's inequality, 
$$\mathbb P\Big(U_j \ge \frac{jN^2\sigma^2}{4} \mid \mathcal X\Big)  \ge \frac 14 \cdot \frac{ \mathbb E[U_j\mid \mathcal X]^2}{ \mathbb E[U_j\mid \mathcal X]^2 + 
\text{Var}(U_j\mid \mathcal X)} \ge \frac{c}{j}, $$
for some constant $c>0$. Integrating with respect to $\mathcal X$, and using that by Chebychev's inequality $\mathbb P(\sum_{k = jN}^{(j+1)N - 1} X_k \ge N \sigma^2 / 2) \ge c'$, for some $c'>0$, we get 
$$\mathbb P\big(U_j \ge \frac{jN^2\sigma^2}{4}\big)\ge \frac{cc'}{j}. $$ 
Since the $(U_j)_{1\le j\le N}$ are independent, we get that for some $\alpha>0$, and for $n$ large enough, 
$$\mathbb P\big(|\mathcal T_n|\ge \frac{N^2\sigma^2}4\big)\ge  \mathbb P\Big(\exists j\le N : U_j \ge \frac{jN^2\sigma^2}{4}\Big) \ge 1 - \prod_{j=1}^N (1- \frac{cc'}{j}) \ge 1 - N^{-\alpha}. $$ 
In particular, for $\beta>5/(4\alpha)$, one has 
$$\sum_{n\ge 1} \mathbb P\big(|\mathcal T_{n^\beta}| \leq \frac{n^{8\beta/5} \sigma^2}{5}\big)<\infty. $$ 
It then follows from Borel-Cantelli lemma, that almost surely for all $n$ large enough, 
$|\mathcal T_{n^\beta}|\ge n^{8\beta/5} \sigma^2/5$, and since $(|\mathcal T_n|)_{n\ge 1}$ is monotone, we get that in fact almost surely, for all $n$ large enough, 
$|\mathcal T_n|\ge n^{8/5} \sigma^2/6$. Together with our previous upper bound on $\mathcal Z_n$, and using that $8/5 > 3/2$, this concludes the proof of the lemma. 
\end{proof}

%%%%%%%%%%%%%%%%%%%%%%%%%%%%%%%%%%%%%%%%%%%%%%%%%%%%%%%%%%%%%%%%%%%%

\section{Proof of Proposition~\ref{prop.3sites}} \label{sec:3sites}
We start by defining some useful processes, which are analogues to those introduced by Tarr\`es in the setting of usual VRRWs~\cite{T1,T2}. 
For $x\in \mathbb Z$, and $n\ge 0$, we set  
$$Y_n^\pm(x) := \sum_{u\in \mathcal T_n} \sum_{v\in\mathcal V(u)} \frac{\1\{X_u = x, X_v = x\pm 1\}}{w(Z_n(x\pm 1))}.$$
A first observation is the following. 
\begin{lem}\label{lem:martingale}
Define the process $(M_n)_{n\ge 0}$ by $M_0=0$, and for $n\ge 0$, 
$$M_{n+1}(x) := Y_n^+(x) - Y_n^-(x).$$
Then $(M_n)_{n\ge 0}$ is an $(\mathcal F_n)_{n\ge 0}$-martingale. 
\end{lem}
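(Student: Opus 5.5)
The plan is to compute the increments $M_{n+1}(x)-M_n(x)$ and show that each has zero conditional expectation given $\mathcal F_n$. Integrability and adaptedness will follow from simple bounds on $|\mathcal T_{n+1}|$.

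I read the definition of $Y^\pm_n(x)$ as intended in the analogy with Tarr\`es' processes: for a vertex $u$ at generation $k=|u|$, the weight in the denominator is evaluated at the local time known when the children of $u$ are placed, namely $w(Z_k(x\pm1))$. With this reading,
$$M_{n+1}(x)-M_n(x)=\sum_{u\in\partial\mathcal T_n}\sum_{v\in\mathcal V(u)}\1\{X_u=x\}\Big(\frac{\1\{X_v=x+1\}}{w(Z_n(x+1))}-\frac{\1\{X_v=x-1\}}{w(Z_n(x-1))}\Big).$$
If instead $Z_n$ is meant literally for every $u\in\mathcal T_n$, the same computation applies to the increment, but the earlier terms get reweighted; I would treat the generation-dependent weight as the intended definition.

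\textbf{Adaptedness and integrability.} The quantity $M_{n+1}(x)$ depends only on $\mathcal T_{n+1}$ and on the positions $(X_u)_{u\in\mathcal T_{n+1}}$, so it is $\mathcal F_{n+1}$-measurable. Since $w$ is non-decreasing and positive, every denominator is at least $w(0)>0$. Hence $|M_{n+1}(x)|\le |\mathcal T_{n+1}|/w(0)$. By Lemma~\ref{lem:esp}, $\mathbb E[\mathcal Z_k]=1+k\sigma^2$, so $\mathbb E|\mathcal T_{n+1}|<\infty$ and $M_{n+1}(x)$ is integrable.

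\textbf{Martingale property.} First condition on $\mathcal F_n$ together with the offspring sets $\mathcal V(u)$ for $u\in\partial\mathcal T_n$. On $\{X_u=x\}$, rule \eqref{def.process} gives
$$\mathbb E\Big[\frac{\1\{X_v=x\pm1\}}{w(Z_n(x\pm1))}\,\Big|\,\mathcal F_n,\mathcal V(u)\Big]=\frac{1}{w(Z_n(x-1))+w(Z_n(x+1))},$$
because $Z_n(x\pm1)$ is $\mathcal F_n$-measurable. The $+$ and $-$ terms therefore cancel for each child $v$.

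Summing over the children $v$ and the vertices $u\in\partial\mathcal T_n$ requires exchanging sum and conditional expectation. This is justified by the integrable domination above, or by conditional Fubini since the number of terms $\sum_{u}|\mathcal V(u)|=\mathcal Z_{n+1}$ is integrable. The conditional expectation of the increment given $\mathcal F_n$ and the offspring is then $0$. Taking conditional expectation with respect to $\mathcal F_n$ (tower property) yields $\mathbb E[M_{n+1}(x)-M_n(x)\mid\mathcal F_n]=0$.

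The only delicate point is the bookkeeping of which local time enters the weights, together with the integrability justification. The cancellation itself is immediate from \eqref{def.process}.
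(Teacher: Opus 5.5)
Your proof is correct and follows the paper's argument, which simply notes that the martingale property follows directly from \eqref{def.process}: for each child of a generation-$n$ vertex at $x$, the $+$ and $-$ terms have the same conditional mean $1/(w(Z_n(x-1))+w(Z_n(x+1)))$ and cancel. Your reading of the weights as $w(Z_{|u|}(x\pm1))$ is the one the paper actually uses (compare the identity $Y_\infty^+(x-1)+Y_\infty^-(x+1)=\sum_n (Z_{n+1}(x)-Z_n(x))/w(Z_n(x))$ in the proof of Lemma~\ref{lem:Ypm}), and your integrability check via Lemma~\ref{lem:esp} fills in a detail the paper leaves implicit.
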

\begin{proof} The proof follows directly from~\eqref{def.process}. 
\end{proof} 
The next lemma is an analogue of~\cite[Corollary 2.1]{T2}. 

\begin{lem}\label{lem:Ypm}
Assume that $\sum_{n\ge 0} \tfrac 1{w(n)}=\infty$. Then almost surely, for any $x\in \mathbb Z$, 
$$\{Z_\infty(x)=\infty\}\ \subseteq\  \{Y_\infty^+(x-1)=\infty\} \cup \{ Y_\infty^-(x+1)=\infty\}.$$ 
\end{lem}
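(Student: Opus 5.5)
The idea is to reproduce Tarr\`es' argument: every visit to $x$ after time $0$ comes from a jump from $x-1$ to $x$ or from $x+1$ to $x$. We first rewrite $Y^+(x-1)$ and $Y^-(x+1)$ as weighted counts of these jumps and compare them with $\sum_k 1/w(k)$.

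Fix $x$. For $n\ge 0$, let $J_n^+:=\#\{(u,v): u\in\partial\mathcal T_n,\ v\in\mathcal V(u),\ X_u=x-1,\ X_v=x\}$ be the number of jumps into $x$ from the left at generation $n+1$, and define $J_n^-$ similarly for jumps from $x+1$. By definition,
$$Y^+_\infty(x-1)=\sum_{n\ge 0}\frac{J_n^+}{w(Z_n(x))},\qquad Y^-_\infty(x+1)=\sum_{n\ge 0}\frac{J_n^-}{w(Z_n(x))}.$$
Every vertex at generation $n+1$ located at $x$ has its parent at $x\pm1$. Hence
$$Z_{n+1}(x)-Z_n(x)=J_n^++J_n^-,$$
and so
$$Y^+_\infty(x-1)+Y^-_\infty(x+1)=\sum_{n\ge0}\frac{Z_{n+1}(x)-Z_n(x)}{w(Z_n(x))}.$$
The problem therefore reduces to showing that this sum diverges on $\{Z_\infty(x)=\infty\}$. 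If at most one of the two terms is infinite we are done, since the sum of two finite quantities is finite.

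The difficulty is that an increment $\Delta_n:=Z_{n+1}(x)-Z_n(x)$ may be large, and then a single term $\Delta_n/w(Z_n(x))$ is much smaller than $\sum_{k=Z_n(x)}^{Z_{n+1}(x)-1}1/w(k)$. Monotonicity of $w$ only gives the inequality in the wrong direction, since $w(Z_n)\le w(k)$. In the case of a single walker, $\Delta_n\le1$ and the sum equals exactly $\sum_{k<Z_\infty(x)}1/w(k)=\infty$.

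This is the step I expect to be the main obstacle, and I would handle it in two ways. First, I would try to bound the increments. Each vertex at generation $n$ at $x\pm1$ has finitely many children, so $\Delta_n<\infty$. One option is to exploit the fact that $\sum_n \Delta_n/w(Z_n)$ is unchanged in distribution terms if each generation is split into its individual particle moves, ordered in some fixed way within the generation. Second, and more robustly, I would use a martingale comparison. Conditionally on $\mathcal F_n$, the increment $\Delta_n$ has mean
$$\sum_{u\in\partial\mathcal T_n:\,X_u=x\pm1}|\mathcal V(u)|\cdot p_u,$$
where $p_u$ is the jump probability to $x$. I would then compare with $\sum_n \1\{\Delta_n\ge1\}/w(Z_n)$, using a conditional Borel--Cantelli (L\'evy) argument for the events $\{\Delta_n\ge 1\}$ weighted by $1/w(Z_n)$.

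Concretely, the plan is to show that on $\{Z_\infty(x)=\infty\}$ the weighted sum $\sum_n \Delta_n/w(Z_n(x))$ diverges almost surely. I would do this by comparing with the sum in which each particle landing at $x$ is counted with weight $1/w$ of an \emph{updated} local time, obtained by ordering the particles of each generation. The difference between the two sums is
$$\sum_n\sum_{j<\Delta_n}\Bigl(\frac1{w(Z_n)}-\frac1{w(Z_n+j)}\Bigr)\ \ge 0.$$
This difference goes in the favourable direction: the sum with updated local times equals $\sum_{k<Z_\infty(x)}1/w(k)=\infty$ and is dominated by $\sum_n\Delta_n/w(Z_n(x))$. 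So the inequality works after all, because $1/w$ is non-increasing.

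Checking this more carefully, the comparison is direct: $\Delta_n/w(Z_n)\ge\sum_{k=Z_n}^{Z_{n+1}-1}1/w(k)$ because $w$ is non-decreasing. Summing over $n$ gives
$$Y^+_\infty(x-1)+Y^-_\infty(x+1)\ge\sum_{k=1}^{Z_\infty(x)-1}\frac1{w(k)}=\infty$$
on $\{Z_\infty(x)=\infty\}$. Hence one of $Y^+_\infty(x-1)$ and $Y^-_\infty(x+1)$ is infinite. So the apparent obstacle disappears, and the proof is just this telescoping bound together with the decomposition of visits to $x$ by the position of the parent.
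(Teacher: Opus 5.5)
Your final argument is correct and is exactly the paper's proof. You decompose $Z_{n+1}(x)-Z_n(x)$ into jumps from $x\pm1$, use that $w$ is non-decreasing to get $\frac{Z_{n+1}(x)-Z_n(x)}{w(Z_n(x))}\ge\sum_{k=Z_n(x)}^{Z_{n+1}(x)-1}\frac{1}{w(k)}$, and telescope. The intermediate worry that monotonicity goes ``the wrong direction'' was mistaken, so that passage and the martingale/conditional Borel--Cantelli detour can simply be deleted.
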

\begin{proof} By definition one has, 
\begin{align*}
 Y_\infty^+(x-1) + Y_\infty^-(x+1) & = \sum_{n=0}^\infty \frac{Z_{n+1}(x) - Z_n(x)}{w(Z_n(x))}\ge \sum_{n=0}^\infty \sum_{k=Z_n(x)}^{Z_{n+1}(x)-1} \frac 1{w(k)} \\
 & = \sum_{k=Z_0(x)}^{Z_\infty(x)-1}\frac 1{w(k)},
 \end{align*} 
using that $w$ is non-decreasing for the first inequality. This proves the lemma.
\end{proof}

We can now give the proof of Proposition~\ref{prop.3sites}. 

\begin{proof}[Proof of Proposition~\ref{prop.3sites}]
We start by showing that localization on two sites is not possible. For this recall that a martingale with bounded increments almost surely  either diverges towards $+\infty$ or $-\infty$, or oscillates between both, in the sense that its $\liminf$ is $-\infty$ and its $\limsup$ is $+\infty$. Now on the event that $Z_\infty(x) = \infty$ for some $x$, either $Y_\infty^+(x-1)=\infty$ or $Y_\infty^-(x+1) = +\infty$, by Lemma~\ref{lem:Ypm}. In the first case, we deduce using Lemma~\ref{lem:martingale} and the aforementioned result that one also has $Y_\infty^-(x-1) = \infty$, which entails $Z_\infty(x-2) =Z_\infty(x-1)= \infty$. Likewise, if $Y_\infty^-(x+1) = \infty$, we deduce that $Z_\infty(x+1) = Z_\infty(x+2) = \infty$. In any case, we deduce that localization on two sites is not possible.

A similar argument shows that localization on three sites is not possible. Indeed, assume that $Z_\infty(x) = Z_\infty(x-1) = Z_\infty(x+1)$, for some $x\in \mathbb Z$. Then either $Y_\infty^+(x-1)$ or $Y_\infty^-(x+1)$ is infinite. In the first case one also has $Y_\infty^-(x-1) = \infty$ which entails $Z_\infty(x-2) = \infty$, while in the former case we conclude similarly that $Z_\infty(x+2) = \infty$. In both cases, at least $4$ sites are visited infinitely often.   
\end{proof}

%%%%%%%%%%%%%%%%%%%%%%%%%%%%%%%%%%%%%%%%%%%%%%%%%%%%%%

\section{Time-dependent generalized P\'olya urn processes} \label{sec:urns}
We consider now the model of time-dependent generalized P\'olya urn process (GPU), presented in the introduction. Our main results concerning these processes are gathered in 
Sections~\ref{sec:GPU},~\ref{sec:GPU2}, and~\ref{sec:GPU3}, while Section~\ref{sec:VRBRW3sites} provides some applications to the VRBRW restricted to three sites.

\subsection{General facts} \label{sec:GPU}
Our first result collects some simple facts about GPU processes.

\begin{prop} \label{prop:GPU}
Consider a GPU process with weight function $w$ and drawing sequence $(\sigma_n)_{n\ge 1}$. Assume that there are initially $B_0\ge 0$ blue balls. Then 
\begin{itemize}
\item[(i)]
$$\sum_{n\ge 1} \frac {\sigma_n}{w(\tau_{n-1}-B_0)} <\infty   \quad  \Longleftrightarrow \quad \mathbb P(B_\infty= B_0 ) >0,$$ 
\item[(ii)] 
$$ \sum_{n\ge 1} \frac {\sigma_n}{w(\tau_{n-1})} =\infty   \quad  \Longrightarrow \quad \mathbb P(R_\infty = B_\infty = \infty) =1,$$ 
\item[(iii)] 
$$\mathbb P(R_\infty = B_\infty = \infty) =0 \quad  \Longrightarrow \quad \sum_{n\ge b+1} \frac {\sigma_n}{w(\tau_{n-1}-b)} <\infty, \ \text{for all }b>0.$$ 
\end{itemize}
\end{prop}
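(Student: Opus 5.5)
For (i), I will compute the probability that no blue ball is ever added. On the event $\{B_n=B_0\}$ one has $R_n=\tau_n-B_0$, so by the Markov property
$$\mathbb P(B_\infty=B_0)=\prod_{n\ge 1}\Big(\frac{w(\tau_{n-1}-B_0)}{w(\tau_{n-1}-B_0)+w(B_0)}\Big)^{\sigma_n}=\prod_{n\ge1}\Big(1+\frac{w(B_0)}{w(\tau_{n-1}-B_0)}\Big)^{-\sigma_n}.$$
This product is positive if and only if $\sum_n \sigma_n\log\big(1+w(B_0)/w(\tau_{n-1}-B_0)\big)<\infty$. Since $w$ is non-decreasing and $\tau_{n-1}\to\infty$, the ratio $w(B_0)/w(\tau_{n-1}-B_0)$ is bounded by a constant. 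Hence $\log(1+x)\asymp x$ on the relevant range, and the condition is equivalent to $\sum_n\sigma_n/w(\tau_{n-1}-B_0)<\infty$, with the harmless constant $w(B_0)>0$ absorbed. This gives (i). I treat the convention $B_0\ge 0$ with $w(0)>0$ in the same way.

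For (ii), it suffices by symmetry to show $B_\infty=\infty$ a.s., for which I use a conditional Borel--Cantelli (Lévy's extension) argument. At step $n$, given $\mathcal F_{n-1}$, the probability that at least one of the $\sigma_n$ draws is blue is at least
$$1-\Big(1-\frac{w(B_{n-1})}{w(B_{n-1})+w(R_{n-1})}\Big)^{\sigma_n}\ \ge\ 1-\Big(1-\frac{w(0)}{2w(\tau_{n-1})}\Big)^{\sigma_n},$$
using $w(B_{n-1})\ge w(0)$ and $w(R_{n-1})\le w(\tau_{n-1})$ by monotonicity. The right-hand side is of order $\min(1,\sigma_n/w(\tau_{n-1}))$. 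Summing over $n$ and using the divergence hypothesis, these conditional probabilities sum to infinity almost surely, deterministically in fact. Lévy's conditional Borel--Cantelli lemma then shows that infinitely many steps add a blue ball, so $B_\infty=\infty$. The same holds for red, so $\mathbb P(R_\infty=B_\infty=\infty)=1$.

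For (iii), I argue by contraposition. Suppose that for some $b>0$ the sum $\sum_{n\ge b+1}\sigma_n/w(\tau_{n-1}-b)$ diverges. With positive probability the urn reaches, at some finite time $m$, a configuration with $B_m\le b$. Taking $m$ minimal with $\tau_m\ge b$, this can be arranged by forcing all draws so that $B_m$ equals roughly $\min(b,\cdot)$ — each such finite sequence of draws has positive probability. From such a state, the dynamics on the event that blue does not grow further behaves like (i) started from $b$. The point is that, as long as $B_n\le b'$ for a fixed $b'$, the blue probability at step $n$ is at least $w(0)/(2w(\tau_{n-1}))$. This is not quite enough, so I refine the argument: while $B_{n-1}\le b$ holds, $R_{n-1}\le\tau_{n-1}$ still gives only $w(\tau_{n-1})$.

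The main obstacle is matching $w(\tau_{n-1}-b)$ rather than $w(\tau_{n-1})$. I would handle it by working on the event $E_k=\{B_n\le k \text{ for all } n\}$ for each $k$. On $E_k$, the conditional probability of adding a blue ball at step $n$ is at least $c\min\bigl(1,\sigma_n w(0)/w(\tau_{n-1})\bigr)$. If $\sum_n\sigma_n/w(\tau_{n-1}-k)=\infty$, Lévy's lemma shows that blue increases infinitely often on $E_k$, which contradicts $E_k$. Monotonicity of $n\mapsto\sum\sigma_n/w(\tau_{n-1}-b)$ in $b$ lets me pass between the different shifts $b$, adjusting $k$ as needed. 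Comparing $w(\tau_{n-1}-b)$ with $w(\tau_{n-1}-B_{n-1})$ on $\{B_{n-1}\le b\}$ is exactly where monotonicity of $w$ enters. Hence $B_\infty<\infty$ has probability zero, symmetrically for $R$, and so $\mathbb P(R_\infty=B_\infty=\infty)=1\ne0$, which establishes (iii) by contraposition.
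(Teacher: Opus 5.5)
Your parts (i) and (ii) are fine. Part (i) is the paper's product computation. In (ii), you apply L\'evy's conditional Borel--Cantelli lemma directly with the deterministic bound $w(0)/(2w(\tau_{n-1}))$. The paper instead picks $k,b$ with $\mathbb P(B_\infty=B_k=b)>0$ and applies the product formula of (i) from time $k$ on; your route is a valid alternative.

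Part (iii), however, has a genuine gap, and its conclusion is false in general.

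\emph{The bound goes the wrong way.} On your event $E_k=\{B_n\le k \text{ for all } n\}$ you have $R_{n-1}=\tau_{n-1}-B_{n-1}\ge\tau_{n-1}-k$. Monotonicity of $w$ then gives $w(R_{n-1})\ge w(\tau_{n-1}-k)$, which is an \emph{upper} bound on the probability of drawing blue, not a lower bound. The only lower bound you actually have there is $c\min(1,\sigma_n w(0)/w(\tau_{n-1}))$. Divergence of $\sum_n\sigma_n/w(\tau_{n-1}-k)$ does not imply divergence of $\sum_n\sigma_n/w(\tau_{n-1})$, so L\'evy's lemma gives nothing.

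\emph{The conclusion fails.} Your claim $\mathbb P(R_\infty=B_\infty=\infty)=1$ is contradicted by the example of Remark~\ref{rem:ex}. There $\sum_k\sigma_k/w(\tau_{k-1}-2)=\infty$, but $\sum_k\sigma_k/w(\tau_{k-1}-1)<\infty$, so $\mathbb P(B_\infty=B_0=1)>0$ by (i).

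\emph{The missing idea.} The shift by $b$ only helps once there are \emph{at least} $b$ balls of the colour in question, not at most $b$ as in your configuration $B_m\le b$. The correct argument, which is the paper's, runs as follows.
\begin{enumerate}
\item Since $\sigma_k\ge1$, with positive probability $B_m\ge b$ and $R_m\ge b$ at some finite time, e.g.\ $m=2b$.
\item Since $B$ and $R$ are non-decreasing, on this event every $n>m$ has $B_{n-1}\ge b$ and hence $R_{n-1}\le\tau_{n-1}-b$.
\item So each ball drawn at step $n$ is blue with conditional probability at least $w(b)/(w(b)+w(\tau_{n-1}-b))\ge w(0)/(2w(\tau_{n-1}-b))$.
\item Your argument from (ii) then gives $B_\infty=\infty$ a.s.\ on this event, and symmetrically $R_\infty=\infty$.
\end{enumerate}
This yields only $\mathbb P(R_\infty=B_\infty=\infty)>0$, not $=1$, and that is exactly what the contrapositive of (iii) requires.
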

\begin{proof}
A direct computation shows that the probability of drawing only red balls is equal to 
$$\mathbb P(B_n = B_0, \ \text{for all }n\ge 1) = \prod_{n\ge 1} \big(1- \frac{w(B_0)}{w(B_0) + w(\tau_{n-1}-B_0)}\big)^{\sigma_n},$$
and the right-hand side is positive if, and only if, $\sum_{n\ge 1} \frac{\sigma_n}{w(\tau_{n-1}-B_0)}<\infty$, proving already Part (i).

For (ii), note that if say $\mathbb P(B_\infty<\infty)>0$, then there must exist $k\ge 1$ and $b\ge 0$, such that 
$\mathbb P(B_\infty = B_k =b  ) >0$. By the previous computation and the fact that $w$ is non-decreasing this entails 
$$\sum_{n\ge k+1} \frac {\sigma_n}{w(\tau_{n-1})} \le \sum_{n\ge k+1} \frac {\sigma_n}{w(\tau_{n-1}-b)} <\infty,$$ 
and the same argument applies if $\mathbb P(R_\infty<\infty)>0$.

Finally for (iii), assume that for some $b\ge 0$, one has $\sum_{n\ge b+1} \frac {\sigma_n}{w(\tau_{n-1}-b)} =\infty$. Note that with positive probability, after $2b$ steps there are at least $b$ blue balls and $b$ red balls in the urn (since $\sigma_k\ge 1$ for all $k\ge 1$, by hypothesis). Then the argument given above for proving (ii) shows that almost surely after this, one has $R_\infty = B_\infty = \infty$, concluding the proof of (iii). 
\end{proof}

\begin{rem} 
\em{The previous result shows that if the function $w$ and the sequence $(\sigma_n)_{n\ge 1}$ are such that $\sum_{n\ge 1} \frac {\sigma_{n+1}}{w(\sigma_1+\dots+\sigma_n)} <\infty$, and $\sum_{n\ge 1} \frac {\sigma_{n+1}}{w(\sigma_1+\dots+\sigma_n-b)} =\infty$, for some $b>0$ (see Remark~\ref{rem:ex} below for an example), then for any $R_0,B_0\ge 1$, for the urn starting from $R_0$ red balls and $B_0$ blue balls, one has $\mathbb P(R_\infty = B_\infty = \infty) \in (0,1)$. 
This differs from the situation for usual generalized P\'olya urn processes (corresponding to the case when $\sigma_n = 1$ for all $n\ge 1$), where by Rubin's construction~\cite{Dav}  
\begin{equation}\label{Rubin}
\sum_{k\ge 1} \frac 1{w(k)} <\infty \ \Longleftrightarrow \ \mathbb P(R_\infty <\infty ) >0 \ \Longleftrightarrow \ \mathbb P( R_\infty <\infty \text{ or }B_\infty <\infty) = 1.
\end{equation}
In particular, in this standard setting, one has for any weight function, 
\begin{equation}\label{01law}
\mathbb P(R_\infty = B_\infty = \infty) \in \{0,1\}.
\end{equation}
}
\end{rem}

\begin{rem}\label{rem:ex}\em{
Consider the weight function $w$ defined by $w(k) = e^{e^k}$, for all $k\ge 0$, and define recursively the sequence $(\sigma_k)_{k\ge 1}$ by $\sigma_k = 
w(\sigma_1+\dots+\sigma_{k-1})$. Assume now that $B_0 = R_0 = 1$, so that $\tau_0=2$, and for $k\ge 1$, $\tau_k = \sigma_1+\dots+\sigma_k +  2$. Then note that one has $\sum_{k\ge 1} \frac{\sigma_k}{w(\tau_{k-1}-1)}<\infty$, which by the first part of Proposition~\ref{prop:GPU} implies that $\mathbb P(B_\infty = B_0 = 1) >0$. However, one can also notice that  $\sum_{k\ge 1} \frac{\sigma_k}{w(\tau_{k-1}-2)}=\infty$. Consequently, 
if after the two first steps, we pick at least one red ball and one blue ball (which happens with positive probability, since $\sigma_1 +\sigma_2\ge 2$), then by the second part of Proposition~\ref{prop:GPU} (see also its proof), almost surely after this, both the red and blue balls will be drawn infinitely often. Hence for this process one has $\mathbb P(R_\infty = B_\infty = \infty) \in (0,1)$, in contrast with the situation in the standard setting, recall~\eqref{01law}. Moreover, if we assume now that $B_0 = R_0=0$, then almost surely $R_\infty = B_\infty = \infty$; hence the behavior of a GPU process may be sensitive to the initial conditions. }
\end{rem}

An important question is now to characterize the set of weight functions $w$ and sequences $(\sigma_n)_{n\ge 1}$ for which there is almost surely fixation of one color (i.e.~for which almost surely one color is drawn only finitely many times). 
A natural guess could be that the reverse implication in Proposition~\ref{prop:GPU} (iii) should be true, but this seems a challenging problem. In Remark~\ref{Rem:weak01} below we suggest 
a slightly weaker criterium, that may be  simpler to prove.   

\subsection{Case $(\sigma_n)_{n\ge 1}$ bounded}  \label{sec:GPU2}
Here we prove Theorem~\ref{thm:GPU}, concerning the case when $(\sigma_n)_{n\ge 1}$ is bounded. 
The proof is based on a continuous-time embedding of the process, which generalizes Rubin's algorithm from~\cite{Dav} to arbitrary sequences $(\sigma_n)_{n\ge 1}$. Hence we first define properly 
this algorithm, together with some basic properties (Lemmas~\ref{lem:Rubin} and~\ref{lem.Rubin2}), and then we will give the proof of Theorem~\ref{thm:GPU}. 

\vspace{0.2cm}
\noindent \textbf{Rubin's type algorithm.} 
Let $R_0$ and $B_0$ be given, as well as two arbitrary sequences of positive numbers $(\xi_i^R)_{i\ge 1}$ and $(\xi_i^B)_{i\ge 1}$. Then for $t\ge 0$ (and up to some time $T_1$ defined below), we let  
$$N_t^R = \sum_{i\ge 1} \mathbf 1\big\{\frac{\xi_1^R + \dots + \xi_i^R}{w(R_0)} \le t\big\}, \quad \text{and}\quad N_t^B = \sum_{i\ge 1} \mathbf 1\big\{\frac{\xi_1^B + \dots + \xi_i^B}{w(B_0)} \le t\big\},$$
which represent respectively the number of red and blue balls drawn up to time $t$, during the first step.  
We also define  
$$T_1 = \inf \{t\ge 0 : N_t^R + N_t^B\ge  \sigma_1\},$$
the time duration of the first step, i.e.~the time at which the $\sigma_1$-th ball is drawn. 
Note that it may be that two balls, one of each color, are drawn at this time (in particular it could be that $N_{T_1}^R + N_{T_1}^B =\sigma_1 +1$). This is not an issue, as on one hand the process is still well-defined in this case and moreover, when the sequences $(\xi_i^R)_{i\ge 1}$ and $(\xi_i^B)_{i\ge 1}$ are distributed as mean one exponential random variables, this almost surely never happens (see Lemma~\ref{lem:Rubin} below). We then set 
$$R_1 = R_0 + N_{T_1}^R, \quad  \text{and} \quad B_1 =B_0 +  N_{T_1}^B,$$
for respectively the total number of red and blue balls in the urn after the first step. 
Additionally, we define
\begin{equation}
\widetilde \xi_1 =\left\{ 
\begin{array}{ll}
\Big( \sum_{i=1}^{N_{T_1}^R+1} \xi_i^R\Big) - w(R_0)\cdot T_1 & \text{if } N_{T_1-}^R=N_{T_1}^R,\\
\Big( \sum_{i=1}^{N_{T_1}^B+1} \xi_i^B\Big) - w(B_0)\cdot T_1 & \text{else}.
\end{array}
\right. 
\end{equation}
This random variable represents the excess of time which remains for the clock that has been engaged before time $T_1$ but did not ring yet at that time (unless two clocks ring at time $T_1$, in which case $\widetilde \xi_1=\xi^B_{N_{T_1}^B +1}$ is still well-defined). 

\vspace{0.2cm}
Now suppose that the times $T_k$, for $k\le n$, and the process $(N_t^R,N_t^B)$, for $t\le T_n$, have been defined, as well as $(R_k,B_k)$ and $\widetilde \xi_k$, for $k\le n$. Suppose also to fix ideas that a red ball is drawn at time $T_n$, i.e.~that $N_{T_n-}^R<N_{T_n}^R$ (a similar construction can be made if instead $N_{T_n-}^B<N_{T_n}^B$). Then for $t>T_n$ (up to some time $T_{n+1}$ defined below), we let 
$$N_t^R = N_{T_n}^R +  \sum_{i\ge 1} \mathbf 1\Big\{\frac{\xi_{R_n+1}^R + \dots + \xi_{R_n+i}^R}{w(R_n)} \le t-T_n \Big\}, $$
and
$$N_t^B = N_{T_n}^B +  \sum_{i\ge 1} \mathbf 1\Big\{\frac{\widetilde \xi_n + \xi_{B_n+2}^B+ \dots + \xi_{B_n+i}^B}{w(B_n)} \le t-T_n\Big\}.$$
Furthermore, we let 
$$T_{n+1} = \inf \{t> T_n : N_t^R + N_t^B \ge \sigma_1+\dots+ \sigma_{n+1}\},$$ 
and 
$$R_{n+1} = R_0+ N_{T_{n+1}}^R, \quad B_{n+1} = B_0 + N_{T_{n+1}}^B.$$ 
Finally, if $N_{T_{n+1}-}^R< N_{T_{n+1}}^R$ (i.e.~if at time $T_{n+1}$ a red ball is drawn), then we let 
$$\widetilde \xi_{n+1}= \widetilde \xi_n + \Big(\sum_{i= N_{T_n}^B+2}^{N_{T_{n+1}}^B +1} \xi_i^B\Big)- w(B_n)\cdot (T_{n+1} - T_n),$$ 
and else, let 
$$ \widetilde \xi_{n+1}= \Big(\sum_{i= N_{T_n}^R+1}^{N_{T_{n+1}}^R +1} \xi_i^R\Big)- w(R_n)\cdot (T_{n+1} - T_n).$$ 
By induction, this defines a sequence $(T_n)_{n\ge 1}$, a process $(N_t^R,N_t^B)_{0\le t< T_\infty}$, where $T_\infty = \lim_{n\uparrow \infty} T_n$, as well as a process $(R_n,B_n)_{n\ge 0}$. 
The next lemma states that the above algorithm provides a continuous-time embedding of any GPU process. As anticipated this algorithm provides a continuous time embedding of GPU processes, as stated in the next lemma.  

\begin{lem}\label{lem:Rubin}
Assume that $(\xi_i^R)_{i\ge 1}$ and $(\xi_i^B)_{i\ge 1}$ are distributed as independent mean one exponential random variables. Then the process $(R_n,B_n)_{n\ge 0}$, defined above via Rubin's algorithm,
has the same law as a GPU process starting from $R_0$ red balls and $B_0$ blue balls. In particular, almost surely $N_{T_n}^R + N_{T_n}^B = \sigma_1+ \dots + \sigma_n$, for all $n\ge 1$. 
\end{lem}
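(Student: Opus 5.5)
The plan is to use the memoryless property of the exponential distribution, together with induction on the step index $n$. The inductive claim is the following. Let $\mathcal G_n$ be the $\sigma$-field generated by $T_1,\dots,T_n$, the path $(N_t^R,N_t^B)_{t\le T_n}$, and the identity of the color drawn at $T_n$. Conditionally on $\mathcal G_n$, the family consisting of the residual clock $\widetilde\xi_n$ and all the variables $\xi_i^R,\xi_i^B$ not yet used (those with indices beyond the ones consumed up to time $T_n$) is i.i.d.\ mean one exponential. At $n=0$ this holds by assumption, with the convention that no clock has been started yet. Granting the claim at step $n$, I will show two things. First, on $(T_n,T_{n+1}]$ the processes $N^R-N^R_{T_n}$ and $N^B-N^B_{T_n}$ are independent Poisson processes with rates $w(R_n)$ and $w(B_n)$, run until their sum first reaches $\sigma_{n+1}$. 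Second, the claim propagates to step $n+1$.

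For the first point, fix the color drawn at $T_n$, say red. The red clock restarts with fresh exponentials $\xi^R_{R_n+1},\dots$ scaled by $w(R_n)$, so it gives a rate $w(R_n)$ Poisson process. The blue clock first waits $\widetilde\xi_n/w(B_n)$ and then uses fresh exponentials. Since $\widetilde\xi_n$ is exponential and independent of everything else given $\mathcal G_n$, this is again a Poisson process of rate $w(B_n)$, independent of the red one. Two independent Poisson processes have no common jump times almost surely, which already gives $N_{T_n}^R+N_{T_n}^B=\sigma_1+\dots+\sigma_n$. Moreover, the superposition of the two processes is a Poisson process of rate $w(R_n)+w(B_n)$, and each of its jumps is independently red with probability $w(R_n)/(w(R_n)+w(B_n))$. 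Hence the first $\sigma_{n+1}$ jumps contain a number of red jumps distributed as $\mathcal B\big(\sigma_{n+1},\frac{w(R_n)}{w(R_n)+w(B_n)}\big)$, independently of the past. This is exactly the GPU transition for $(R_{n+1},B_{n+1})$ given $(R_n,B_n)$, so the two processes have the same law.

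For the propagation, suppose a red ball is drawn at $T_{n+1}$. The new residual $\widetilde\xi_{n+1}$ equals the remaining portion of the current blue exponential (which is $\widetilde\xi_n$ itself if no blue ball was drawn during $(T_n,T_{n+1}]$), measured in units where the clock runs at rate $w(B_n)$. Conditionally on $\mathcal G_{n+1}$, this amounts to conditioning that exponential to exceed a $\mathcal G_{n+1}$-measurable threshold. By memorylessness, the excess is again exponential with mean one and independent of $\mathcal G_{n+1}$ and of the unused variables. The blue case is symmetric.

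The only step I expect to need care is stating this conditional memorylessness rigorously. The threshold $w(B_n)(T_{n+1}-T_n)$ minus the blue clocks already consumed depends on the other clocks, and the event on which we condition also involves stopping at the $\sigma_{n+1}$-th jump. I would handle this as follows: condition on all red variables and on the blue variables that rang during the step, and observe that the event ``the current blue clock has not rung by $T_{n+1}$'' is then exactly $\{\text{that clock}>\text{a measurable threshold}\}$. The standard strong-Markov and memoryless argument then applies.
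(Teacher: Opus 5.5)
Your proposal is correct and follows essentially the same route as the paper: induction combined with the memoryless property of the exponential clocks. The only difference is granularity. The paper inducts draw by draw, treating each inter-draw time as the minimum of two independent exponentials with rates $w(R_n)$ and $w(B_n)$, where red wins with probability $w(R_n)/(w(R_n)+w(B_n))$. You induct step by step via Poisson superposition and thinning, and you make explicit the conditional memorylessness of the residual clock, which the paper only invokes implicitly.
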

\begin{proof}
The proof is standard, but let us recall it for completeness. Denote by $s_k$ the time when the $k$-th ball is drawn. More precisely, let $s_0 = 0$, and define inductively $(s_k)_{k\ge 1}$ by 
$$s_{k+1} = \inf \{t>s_k : N_t^R + N_t^B > N_{s_k}^R + N_{s_k}^B\}. $$ 
Assume that almost surely for any $\ell < k$, one has 
\begin{equation}\label{rec.exp}
N_{s_{\ell +1}}^R + N_{s_{\ell +1}}^B = N_{s_\ell}^R + N_{s_\ell}^B +1,
\end{equation}
and let us prove it now for $\ell = k$. By the induction hypothesis, one has almost surely, $N_{T_n}^R + N_{T_n}^B = \sigma_1+ \dots + \sigma_n$, for all $n$ such that $T_n \le s_k$. 
Letting now 
\begin{equation}\label{def.Ft}
\mathcal F_t= \sigma((N_s^R,N_s^B)_{s\le t}),
\end{equation}
one can observe that conditionally on $\mathcal F_{s_k}$, and on the event $T_n\le s_k <T_{n+1}$, by construction and the memoryless property of exponential random variables, $s_{k+1} - s_k$ is distributed as the minimum between two independent exponential random variables with respective parameters $w(R_n)$ and $w(B_n)$ (i.e.~as an exponential random variable with parameter $w(R_n) + w(B_n)$). In particular, almost surely these two variables are distinct, which proves~\eqref{rec.exp} for $\ell = k$ as desired. Additionally the probability that the $(k+1)$-th ball 
that is picked is red is $w(R_n)/(w(R_n) + w(B_n))$, which by induction proves the second assertion of the lemma. 
\end{proof}
We shall also need the following fact. 
\begin{lem}\label{lem.Rubin2}
Assume that $(\xi_i^R)_{i\ge 1}$ and $(\xi_i^B)_{i\ge 1}$ are distributed as independent mean one exponential random variables, and that $\sum_{n\ge 0} \frac{\sigma_{n+1}}{w(\lceil \tau_n/2\rceil)}<\infty$. 
Then, $T_\infty<+\infty$, almost surely. 
\end{lem}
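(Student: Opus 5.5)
Our approach is to show $\mathbb E[T_\infty]<\infty$, by bounding the expected duration of each step. Write $T_\infty=\sum_{n\ge 0}(T_{n+1}-T_n)$ with $T_0=0$. We compute $\mathbb E[T_{n+1}-T_n\mid \mathcal F_{T_n}]$ using the description from the proof of Lemma~\ref{lem:Rubin}. Conditionally on $\mathcal F_{T_n}$, and by the memoryless property of the exponential clocks, step $n+1$ consists of exactly $\sigma_{n+1}$ successive draws. Throughout this step the composition stays $(R_n,B_n)$, since the rates $w(R_n)$ and $w(B_n)$ are only updated at the end of the step.

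Each inter-draw waiting time is therefore exponential with parameter $w(R_n)+w(B_n)$. Hence
$$\mathbb E[T_{n+1}-T_n\mid \mathcal F_{T_n}] = \frac{\sigma_{n+1}}{w(R_n)+w(B_n)}.$$
Since $R_n+B_n=\tau_n$, we have $\max(R_n,B_n)\ge \lceil \tau_n/2\rceil$. As $w$ is non-decreasing and positive,
$$w(R_n)+w(B_n)\ge w(\lceil \tau_n/2\rceil).$$

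Taking expectations and summing gives
$$\mathbb E[T_\infty]\le \sum_{n\ge0}\frac{\sigma_{n+1}}{w(\lceil\tau_n/2\rceil)}<\infty,$$
which yields $T_\infty<\infty$ almost surely.

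The only delicate point is justifying the conditional law of the step duration. The excess variable $\widetilde\xi_n$ of the clock carried over from the previous step must be handled by the memoryless property, exactly as in the proof of Lemma~\ref{lem:Rubin}, applied at each draw time $s_k$. The $s_k$ are stopping times for the filtration~\eqref{def.Ft}, so the strong Markov/memoryless argument applies to each of them. Granted this, each of the $\sigma_{n+1}$ gaps has conditional mean $1/(w(R_n)+w(B_n))$, and the bound follows by linearity.
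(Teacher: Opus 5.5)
Your proposal is correct and follows essentially the same argument as the paper. Conditionally on $\mathcal F_{T_n}$, the duration $T_{n+1}-T_n$ is a sum of $\sigma_{n+1}$ independent exponentials of rate $w(R_n)+w(B_n)\ge w(\lceil \tau_n/2\rceil)$, using $\max(R_n,B_n)\ge\lceil\tau_n/2\rceil$ and monotonicity of $w$; hence $\mathbb E[T_\infty]<\infty$, and your extra care about applying the memoryless property at the draw times $s_k$ just makes explicit what the paper leaves implicit.
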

\begin{proof} Using the well-known fact that if $X$ and $Y$ are distributed as independent exponential random variables with mean $\lambda$ and $\mu$ respectively, then $\min(X,Y)$ follows an exponential law with mean $\lambda + \mu$, we deduce that for each $n\ge 0$, conditionally on $(R_n,B_n)$, almost surely $T_{n+1} - T_n$ is distributed as a sum of $\sigma_{n+1}$ independent exponential random variables with mean $\frac 1{w(R_n) + w(B_n)}$ (with the convention that $T_0 = 0$). Thus, for any $n\ge 0$, almost surely 
$$\mathbb E[T_{n+1}  - T_n \mid \mathcal F_{T_n}] = \frac {\sigma_{n+1}}{w(R_n) + w(B_n)},$$
where $(\mathcal F_t)_{t\ge 0}$ is defined in~\eqref{def.Ft}.  
Moreover,  Lemma~\ref{lem:Rubin} entails that almost surely $R_n + B_n = \tau_n$, and since $w$ is nondecreasing, it follows that almost surely 
$$\mathbb E[T_{n+1}  - T_n \mid \mathcal F_{T_n}] \le  \frac {\sigma_{n+1}}{w(\lceil \tau_n/2\rceil)}. $$ 
Hence the hypothesis of the lemma implies that 
$$\mathbb E[T_\infty] = \sum_{n\ge 0} \mathbb E[T_{n+1} - T_n] <\infty, $$
and the desired result follows. 
\end{proof} 

\begin{rem}\label{Rem:weak01} \emph{Given the above result, it is really tempting to believe that the $0-1$ law of Theorem~\ref{thm:GPU} should be true under the more general hypothesis of this lemma. 
However, the lack of monotonicity in Rubin's construction makes it difficult to prove.  }
\end{rem}

We are now ready for the proof of Theorem~\ref{thm:GPU}, concerning bounded sequences $(\sigma_n)_{n\ge 1}$.
 
\begin{proof}[Proof of Theorem~\ref{thm:GPU}]
By assumption there exists an integer $K>0$, such that $\sigma_n\le K$, for all $n\ge 0$. Assume first that $\sum_{n\ge 1} \frac 1{w(n)}<\infty$, and consider Rubin's construction of the process. 
By Lemma~\ref{lem.Rubin2}, one has $T_\infty<\infty$, almost surely. Now for each $n\ge 0$, we let  
$$A_n = \big\{R_{n+K} \ge \frac{\tau_{n+K}}2 + K\big\},$$
and note that almost surely, 
\begin{equation}\label{prob.An}
\mathbb P(A_n\mid  \mathcal F_{T_n})\cdot \mathbf 1_{\{R_n \ge \tau_n/ 2 \}} \ge \frac{1}{2^{K^2}},
\end{equation}
since, for the event $A_n$ to hold, it suffices that only red balls are picked during the first $K$ steps after time $T_n$. Moreover, we claim that for each $n\ge 0$, 
\begin{equation}\label{claimAnBn}
A_n \cap \Big\{\sum_{k \ge \frac{\tau_{n+K}}{2} + K}  \frac{\xi_k^R}{w(k-K)}<\sum_{k \ge\frac{\tau_{n+K}}{2} - K} \frac{\xi_k^B}{w(k-1)}\Big\} \ \subseteq \ \{B_\infty<\infty\}. 
\end{equation}
Indeed, by construction, on the event $A_n$, we have using the monotonicity of $w$, 
$$ \sum_{B_\infty \ge k \ge  \frac{\tau_{n+K}}{2} - K}  \frac{\xi_k^B}{w(k-1)}\le T_\infty- T_{n+K} \le \sum_{R_\infty  \ge k-1 \ge \frac{\tau_{n+K}}{2} + K}  \frac{\xi_k^R}{w(k-K)},$$
and~\eqref{claimAnBn} follows. Consequently, on the event $\{R_n \ge \tau_n/ 2 \}$, one has almost surely, 
\begin{align*}
\mathbb P(B_\infty<\infty\mid\mathcal F_{T_n})  & \ge  \frac{1}{2^{K^2}} \cdot \mathbb P\Big(\sum_{k \ge \frac{\tau_{n+K}}{2} + K}  \frac{\xi_k^R}{w(k-K)}<\sum_{k \ge\frac{\tau_{n+K}}{2} - K} \frac{\xi_k^B}{w(k-1)}\Big) \\ 
&  \ge  \frac{1}{2^{K^2+1}}, 
\end{align*}
where for the last inequality, we use that if $X$ and $X'$ are two independent random variables with the same law, and $Z$ is almost surely positive, then by symmetry, 
$$\mathbb P(X<X' + Z) \ge \mathbb P(X\le X') \ge 1/2.$$  
Similarly, on the event $\{B_n >\tau_n/ 2 \}$, one has almost surely, 
$$\mathbb P(R_\infty<\infty\mid\mathcal F_{T_n}) \ge \frac{1}{2^{K^2+1}}, $$
and therefore, since almost surely either $R_n\ge \tau_n/2$ or $B_n > \tau_n / 2$, one also has 
$$\mathbb P(R_\infty<\infty\text{ or }B_\infty<\infty \mid\mathcal F_{T_n}) \ge \frac{1}{2^{K^2+1}}. $$ 
Letting $n\to \infty$, we deduce from martingale theory that almost surely 
$$\mathbf 1_{\{R_\infty<\infty\text{ or }B_\infty<\infty \}} \ge  \frac{1}{2^{K^2+1}}, $$ 
and thus almost surely only one color is picked infinitely often, as wanted.

Now if $\sum_{n\ge 1} \frac 1{w(n)} = \infty$, it follows from Proposition~\ref{prop:GPU} (ii) that almost surely $R_\infty = B_\infty = \infty$, which completes the proof of the theorem. 
\end{proof}

\subsection{Case of weights growing (stretched) exponentiallly fast}\label{sec:GPU3}
Here we consider another setting, where $w(n)$ grows at least stretched exponentially fast, i.e. like $\exp(n^\alpha)$, with $\alpha>1/2$. In fact our exact hypotheses are as follows. We extend $w$ as a function on $[0,\infty)$ by linear interpolation, and we assume in this whole section that $\sum_{n\ge 0} \frac 1{w(n)}<\infty$ (which is also a consequence of condition $(iii)$ below).  

\begin{Hypo}\label{hypo}
\begin{itemize}
\item[$(i)$] For any $\kappa>0$, 
$$\sum_{n\ge 1} \exp(-\kappa \cdot\sigma_n)<\infty. $$ 
\item[$(ii)$] $$\liminf_{n\to \infty} \frac{w( \frac{\tau_n}{2} + \sqrt{\sigma_n})}{w(\frac{\tau_n}{2})} >1. $$ 
\item[$(iii)$]  For any $\varepsilon>0$, there exists $\delta>0$, such that for any $\gamma\ge 1+\varepsilon$, 
$$\liminf_{x\to \infty} \frac{w(\gamma x)}{w(x)} \ge  \gamma + \delta. $$ 
\item[$(iv)$]  There exists $\varepsilon>0$ and $n_0$, such that for all $n\ge n_0$,  
$$\sum_{i=n}^\infty \frac{\sigma_{i+1}}{w((1-\varepsilon)\tau_i)} \le \frac 14 \sum_{i = \lfloor \varepsilon \tau_n\rfloor }^\infty \frac 1{w(i)}. $$ 
\end{itemize}
\end{Hypo}

\begin{rem}\label{rem:hyp}
\emph{\begin{enumerate}
\item Of course, for $(i)$ to be satisfied, it suffices that $\lim_{n\to \infty} \frac{\sigma_n}{\log n}= \infty$, but note that these two conditions are not equivalent.    
\item Concerning the weight function $w$, the most restrictive hypothesis is the condition $(ii)$, as it imposes some stretched exponential growth, while hypotheses $(iii)$ and $(iv)$ would be satisfied for weights growing only polynomially fast. 
More precisely, 
\begin{itemize}
\item[$\bullet$] if $w(n) = n^\alpha$, with $\alpha>1$, then $(iii)$ is satisfied, and $(iv)$ as well, if $(\sigma_n)_{n\ge 1}$ does not grow too fast, e.g. if $\sigma_n=(\log n)^\beta$, for some $\beta>0$ (in fact one needs $\beta>1$ 
to satisfy also $(i)$), or if $\sigma_n = n^\beta$, with $\alpha> 2$ and $\beta\in (0,\alpha-2]$. 
\item[$\bullet$] On the other hand for $(ii)$ one needs faster growth of the weight function $w$ and $(\sigma_n)_{n\ge 1}$. Indeed, if say $w(n) = \exp(n^\alpha)$, with $\alpha \in (0,1)$ and 
$\sigma_n= n^\beta$, then $(ii)$ is satisfied if and only if $\alpha>1/2$ and $\beta \ge \frac{1-\alpha}{\alpha - \frac 12}$, while if $w(n) = \exp(n^\alpha)$, 
with $\alpha\ge 1$, then $(ii)$ is satisfied for any sequence $(\sigma_n)_{n\ge 1}$.  
\end{itemize}
\end{enumerate}}
\end{rem}

Our next result ensures that fixation occurs almost surely when $w$ and $(\sigma_n)_{n\ge 1}$ satisfy all the conditions of Hypothesis~\ref{hypo}. 
\begin{theo}\label{theo.GPUexpbis} 
Assume that Hypothesis~\ref{hypo} $(i)-(iv)$ hold. Then 
$$\mathbb P(R_\infty<\infty \text{ or } B_\infty<\infty)= 1. $$ 
\end{theo}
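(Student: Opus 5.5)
We would again work with Rubin's continuous-time embedding (Lemma~\ref{lem:Rubin}). Since $\sum 1/w(n)<\infty$ together with $(iv)$ gives $\sum_n \sigma_{n+1}/w((1-\varepsilon)\tau_n)<\infty$, Lemma~\ref{lem.Rubin2} applies (note $\lceil\tau_n/2\rceil\ge(1-\varepsilon)\tau_n$ fails, so instead we bound $\mathbb E[T_{n+1}-T_n\mid\mathcal F_{T_n}]\le \sigma_{n+1}/w(\tau_n/2)$ and use $(iii)$ to compare $w(\tau_n/2)$ with $w((1-\varepsilon)\tau_n)$ up to a constant). Hence $T_\infty<\infty$ a.s. As in the proof of Theorem~\ref{thm:GPU}, the strategy is a Lévy $0$–$1$ argument: show that there is $c>0$ such that a.s., for all $n$ large, $\mathbb P(R_\infty<\infty\text{ or }B_\infty<\infty\mid \mathcal F_{T_n})\ge c$, and then let $n\to\infty$.

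\textbf{Step 1 (imbalance is created).} Without boundedness of $\sigma_n$ we cannot force all red draws in $K$ steps. Instead we exploit the size of $\sigma_{n+1}$. Conditionally on $\mathcal F_{T_n}$ with $R_n\ge \tau_n/2$, the red count added at step $n+1$ is Binomial$(\sigma_{n+1},p)$ with $p\ge 1/2$. So with probability at least some $c_0>0$ (CLT/Berry–Esseen uniformly, since $\sigma_n\to\infty$ by $(i)$), one gets $R_{n+1}\ge \tau_{n+1}/2+\sqrt{\sigma_{n+1}}$. By $(ii)$, this gives $w(R_{n+1})\ge (1+\eta)\, w(B_{n+1})$ for a fixed $\eta>0$.

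\textbf{Step 2 (imbalance propagates to a macroscopic one).} Once red has a multiplicative weight advantage $1+\eta$, the drift is linear: each draw is red with probability $\ge \frac{1+\eta}{2+\eta}$. Using $(i)$ and Chernoff bounds, with summable failure probabilities $\sum_m e^{-\kappa\sigma_m}$, red gains at least a fraction $\frac12+\eta'$ of each subsequent batch. Then $R_m/B_m$ grows, and by $(iii)$ the weight ratio $w(R_m)/w(B_m)$ increases further, self-reinforcingly. With probability $\ge c_1>0$ one reaches a time $m$ with $B_m\le \varepsilon\tau_m$, i.e.\ $R_m\ge (1-\varepsilon)\tau_m$. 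This step is the main obstacle: one must make the bootstrap quantitative, so that the sum of the failure probabilities from time $n$ onward is less than $1/2$, uniformly in $n$ large. I would track $\gamma_m=R_m/B_m$ and use $(iii)$ to show that $\gamma_m-1$ increases geometrically along batches, except on an event of summable probability.

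\textbf{Step 3 (fixation via clocks).} On $\{R_m\ge(1-\varepsilon)\tau_m,\ B_m\le\varepsilon\tau_m\}$, the remaining red time is bounded by $T_\infty-T_m\le \sum_{i\ge m}(T_{i+1}-T_i)$, with conditional mean at most $\sum_{i\ge m}\sigma_{i+1}/w((1-\varepsilon)\tau_i)$ as long as red dominates. Meanwhile, for blue to reach infinity, its clocks would need total time $\sum_{k\ge B_m}\xi^B_k/w(k)$, which has mean at least $\sum_{k\ge \lfloor\varepsilon\tau_m\rfloor}1/w(k)$. By $(iv)$ the former mean is at most a quarter of the latter. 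Markov's inequality on the red side, together with a lower bound (Paley–Zygmund, or the fact that a sum of independent exponentials exceeds half its mean with probability bounded below) on the blue side, shows that with probability $\ge c_2>0$ blue's clock never rings enough before $T_\infty$, so $B_\infty<\infty$. A stopping argument handles the caveat that red must remain dominant: if domination fails, blue must have rung many clocks in little time, which is already controlled by the same comparison. The symmetric case $B_n>\tau_n/2$ is identical. Combining Steps 1–3 gives $c=c_0c_1c_2$, and the martingale convergence theorem concludes.
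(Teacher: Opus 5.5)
Your overall architecture matches the paper's: create an imbalance via the CLT and (ii), amplify it with Chernoff bounds using the summable tails from (i) and the bootstrap from (iii), compare clocks using (iv), and conclude with a martingale $0$--$1$ argument. The paper packages your Steps 1--2 as the almost sure statements of Lemmas~\ref{lem:loc1} and~\ref{lem:loc2}, and then restricts to the events $A_{k,K}$ via~\eqref{limiteRinftyBinfty}. Your Steps 1--2 are workable in principle, with one correction: $\gamma_m-1$ does not grow geometrically per batch. A batch moves the ratio only by $O(\sigma_{m+1}/\tau_m)$, and you gain roughly $\delta$ each time $\tau$ grows by a constant factor; finitely many such epochs suffice. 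Your preliminary claim $T_\infty<\infty$ is not justified, because (iii) only bounds $w(\gamma x)/w(x)$ from below and so cannot give $w(\tau_n/2)\gtrsim w((1-\varepsilon)\tau_n)$. Fortunately it is not needed.

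The genuine gap is in Step 3. In the standard Rubin embedding, the remaining time $T_\infty-T_m$ (and your stopped version $X$ of it) is a function of \emph{both} clock stacks. Step ends are triggered by blue as well as red rings, and how much of each red clock is consumed, at which rate, depends on when the blue draws end the steps. So $X$ is not independent of the blue sum $Y=\sum_{k\ge B_m}\xi^B_k/w(k)$, and you can only combine your two estimates through $\mathbb P(X<a\le Y)\ge \mathbb P(Y\ge a)-\mathbb P(X\ge a)$. With the factor $1/4$ from (iv), this fails with the tools you name. Paley--Zygmund gives $\mathbb P(Y\ge \theta\,\mathbb E Y)\ge (1-\theta)^2/2$ and Markov gives $\mathbb P(X\ge\theta\,\mathbb EY)\le 1/(4\theta)$, but $2\theta(1-\theta)^2\le 8/27<1$ for every $\theta\in(0,1)$. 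A merely positive lower bound on the blue side is therefore useless.

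The missing idea is the paper's modified embedding. Keep a single stack for blue, but give red a fresh batch $(\xi^{(j)}_\ell)_{\ell\le\sigma_{j+1}}$ of exponentials at each step $j$, discarding unrung residuals. Then step $j$ lasts at most $\sum_\ell\xi^{(j)}_\ell/w(R_j)$, so on $\{R_j\ge KB_j\ \forall j\ge k\}$ the total time is at most $Z^+_k(\varepsilon)$. This is independent of $Z^-_k(\varepsilon)$, so the Markov and Paley--Zygmund bounds multiply to give $\mathbb P(Z_k^+\ge Z_k^-)\le 15/16$.

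Relatedly, your ``stopping argument'' for keeping red dominant does not work as stated. Blue ringing many clocks quickly only says that a \emph{partial} blue sum is smaller than $X$, which does not contradict $X<Y$. Permanent dominance has to come from the Chernoff bootstrap itself. That is why the paper first proves that a.s.\ $R_n/B_n\to\infty$ or $B_n/R_n\to\infty$, and then works on $A_{k,K}$.
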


Note that based on Remark~\ref{rem:hyp}, one can notice that this result implies that fixation occurs, when $w(n) = \exp(cn^\alpha)$ and $\beta \ge \frac{1-\alpha}{\alpha - \frac 12}$. The extension to arbitrary $\beta \ge \frac{1-\alpha}{\alpha}$, as claimed in Theorem~\ref{thm:GPUexp}, will be discussed later.  
The proof of Theorem~\ref{theo.GPUexpbis} relies on two intermediate results, that we first prove. 

\begin{lem}\label{lem:loc1}
Assume that Hypothesis~\ref{hypo} (i) and (ii) hold. Then there exists $\varepsilon>0$, such that almost surely, 
$$\liminf_{n\to \infty} \frac{R_n}{B_n} \ge 1+\varepsilon,\quad  \text{or}\quad \liminf_{n\to \infty} \frac{B_n}{R_n} \ge 1+\varepsilon. $$ 
\end{lem}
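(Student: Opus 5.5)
Write $D_n := R_n - B_n$, so that $R_n = (\tau_n + D_n)/2$. The plan is to show that $|D_n|$ eventually exceeds a fixed multiple of $\tau_n$ and that its sign stabilizes.

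\textbf{Step 1: the imbalance exceeds $\sqrt{\sigma_{n+1}}$ infinitely often.} Conditionally on $\mathcal F_n$, the increment $D_{n+1}-D_n$ is $2\mathcal B(\sigma_{n+1},p_n)-\sigma_{n+1}$, where $p_n = w(R_n)/(w(R_n)+w(B_n))$. If $D_n\ge 0$ then $p_n\ge 1/2$, so the increment stochastically dominates a centered symmetric binomial. By the central limit theorem, with probability at least some $c>0$ uniformly in $n$ (for $n$ large), we get $D_{n+1}\ge 2\sqrt{\sigma_{n+1}}$. The case $D_n\le 0$ is symmetric. By Lévy's conditional Borel--Cantelli lemma, almost surely $|D_{n+1}|\ge 2\sqrt{\sigma_{n+1}}$ for infinitely many $n$.

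\textbf{Step 2: a drift proportional to $\sigma$ at balanced-but-not-equal states.} By Hypothesis~\ref{hypo}(ii), there is $\eta>0$ such that $w(\tau_n/2+\sqrt{\sigma_n})\ge (1+\eta)\,w(\tau_n/2)$ for all large $n$. Assume $D_n\ge \sqrt{\sigma_n}$ (up to harmless changes of indices). Then $R_n\ge \tau_n/2+\sqrt{\sigma_n}/2$ and $B_n\le \tau_n/2$. Monotonicity of $w$ gives $p_n\ge 1/2+\eta'$ for some $\eta'>0$, possibly after adjusting constants so that $\sqrt{\sigma_n}/2$ can be used; I would first restate (ii) with a constant in front of $\sqrt{\sigma_n}$. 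Hoeffding's inequality then gives
$$D_{n+1}\ge D_n + \eta'\sigma_{n+1}$$
with conditional probability at least $1-\exp(-\kappa\sigma_{n+1})$. By Hypothesis~\ref{hypo}(i) these failure probabilities are summable.

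\textbf{Step 3: a bootstrap to linear separation.} Once $D_n\ge \sqrt{\sigma_{n}}$ at some large time $n$, Step 2 can be iterated. On the intersection of the good events, $D_m\ge \eta'(\sigma_{n+1}+\dots+\sigma_m)$, which is of order $\eta'\tau_m$ for $m\gg n$. The required monotonicity is preserved: $D_m$ grows, so $D_m\ge\sqrt{\sigma_{m}}$ continues to hold. This is clear if $\sigma$ does not jump wildly; in general one uses that $D_m\ge \eta'\sigma_m$ after each good step, and $\eta'\sigma_m\ge\sqrt{\sigma_m}$ for large $\sigma_m$, which is guaranteed eventually by (i). Because all good events hold from some point on almost surely, a Borel--Cantelli argument gives the following: almost surely, after the first large time at which $D_n\ge\sqrt{\sigma_n}$ (which exists by Step 1), the sign of $D$ is fixed and $|D_m|\ge \eta'\tau_m/2$ eventually. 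This yields $R_m/B_m\ge (1+\eta'/2)/(1-\eta'/2)$, so one may take $\varepsilon$ accordingly.

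\textbf{Main obstacle.} The main obstacle is Step 3's uniformity. The probability that the bootstrap fails after a start at time $n$ is at most $\sum_{m>n}e^{-\kappa\sigma_m}$, which is small for large $n$. Combining Step 1 (infinitely many restart opportunities) with a Lévy zero-one argument, the event ``eventually linearly separated'' has conditional probability tending to one. I expect the technical care to lie in matching the threshold $\sqrt{\sigma_n}$ in (ii) with the CLT fluctuation scale from Step 1, including the possibly different indices $\sigma_n$ versus $\sigma_{n+1}$.
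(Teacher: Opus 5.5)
Your proposal is correct and follows the paper's proof. The shared steps are a CLT kick to an imbalance of order $\sqrt{\sigma_n}$, Hypothesis~\ref{hypo}(ii) turning this into a drawing probability at least $\frac12+\varepsilon$, Chernoff/Hoeffding bounds whose failure probabilities $e^{-\kappa\sigma_{n+1}}$ are summable by (i), and iteration via $\varepsilon\sigma_n\ge\sqrt{\sigma_n}$. You close with Borel--Cantelli or L\'evy's 0-1 law over infinitely many kicks, whereas the paper uses restart stopping times $s_k$ that each succeed with conditional probability at least $c'>0$.

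One small correction: (ii) does not in general let you replace $\sqrt{\sigma_n}$ by $\frac12\sqrt{\sigma_n}$, so drop that rescaling. You do not need it, since Step~1 already gives $|D_{n+1}|\ge 2\sqrt{\sigma_{n+1}}$, which is exactly the threshold $\tau_{n+1}/2+\sqrt{\sigma_{n+1}}$ appearing in (ii). That threshold is preserved along good steps once $\eta'\sigma_m\ge 2\sqrt{\sigma_m}$, which holds eventually because (i) forces $\sigma_m\to\infty$.
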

\begin{proof}
Hypothesis~\ref{hypo} (ii) ensures that there exist $n_0$ and $\varepsilon\in(0,1/2)$, such that for all $n\ge n_0$, 
\begin{equation}\label{cond.iibis}
w\big( \frac{\tau_n}{2} + \sqrt{\sigma_n}\big) \ge (1+8\varepsilon)\cdot w(\frac{\tau_n}{2}),
\end{equation}
and by Hypothesis~\ref{hypo} (i) one can further assume that for all $n\ge n_0$, 
 \begin{equation}\label{cond.ibis}
\varepsilon \sigma_n \ge  \sqrt{\sigma_n}. 
\end{equation}
Note now that since $R_n + B_n = \tau_n$, for all $n\ge 0$, one has $\max(R_{n_0},B_{n_0}) \ge \tau_{n_0} /2$. Suppose to fix ideas that $R_{n_0} \ge \tau_{n_0} /2$. Then by the central limit theorem, 
there exist a constant $c>0$, such that on this event, letting $\mathcal F_n = \sigma(R_k,k\le n)$, 
\begin{equation}\label{stepn0}
\mathbb P\Big(R_{n_0+1} \ge \frac{\tau_{n_0+1}}{2} + \sqrt{\sigma_{n_0+1}}\mid \mathcal F_{n_0}\Big)\ge c.
\end{equation}
Note also that by~\eqref{cond.iibis}, on the event $R_n \ge \frac{\tau_n}{2} + \sqrt{\sigma_n}$, one has using that $w$ is nondecreasing, 
$$\frac{w(R_n)}{w(R_n) + w(B_n)}\ge \frac 12 + \frac{2\varepsilon}{1+4\varepsilon}> \frac 12 + \varepsilon, $$ 
and thus by Chernoff inequality, for some constant $\kappa=\kappa(\varepsilon)>0$, 
\begin{equation}\label{eq.chernoff}
\mathbb P\Big(R_{n+1} - R_n \ge (\frac 12 + \varepsilon ) \sigma_{n+1} \mid \mathcal F_n\Big) \ge 1 - \exp(- \kappa \cdot\sigma_{n+1}).  
\end{equation}
Thanks to~\eqref{cond.ibis}, one can then iterate this estimate, and obtain by induction that on the event $R_{n_0}\ge \tau_{n_0}/2$, with $c$ as in~\eqref{stepn0}, 
\begin{equation}\label{eq:c'}
\mathbb P\Big(R_{n+1}-R_n \ge (\frac 12 + \varepsilon ) \sigma_{n+1}, \forall n \ge n_0+1 \mid \mathcal F_{n_0}\Big)  \ge c \prod_{n\ge n_0+1} \big(1-\exp(- \kappa \cdot\sigma_{n+1})\big). 
\end{equation} 
Using now Hypothesis~\ref{hypo} (i), one can see that the product above is positive. We then define inductively a sequence of stopping times $(s_k)_{k\ge 0}$ as follows. First $s_0 = n_0$, 
and given $s_k$, we let 
\begin{equation*}
s_{k+1} = \left\{ 
\begin{array}{ll}
\inf\{n \ge s_k+1 : R_{n+1} - R_n < (\frac 12 + \varepsilon) \sigma_{n+1}\} & \text{if }R_{s_k}\ge \frac{\tau_{s_k}}{2} \\
 \inf\{n \ge s_k+1 : B_{n+1} - B_n < (\frac 12 + \varepsilon) \sigma_{n+1}\} & \text{else.} 
\end{array}
\right. 
\end{equation*}
By~\eqref{eq:c'}, one can see that for any $k\ge 0$, almost surely on the event $s_k<\infty$, one has 
$$\mathbb P(s_{k+1} = \infty \mid \mathcal F_{s_k}) \ge c':=  c \prod_{n\ge n_0+1} \big(1-\exp(- \kappa \cdot\sigma_{n+1})\big). $$ 
By induction, this gives for any $N\ge 1$,
$$\mathbb P(s_k<\infty, \ \forall k\le N) \le (1- c')^N,$$
and hence letting $N$ go to infinity we get, 
$$\mathbb P(s_k<\infty, \ \forall k\ge 0) = 0. $$ 
In other words, almost surely there exists $k\ge 1$, such that $s_k= \infty$, and the lemma follows.   
\end{proof}

\begin{lem}\label{lem:loc2}
Assume that Hypothesis~\ref{hypo} $(i)$, $(ii)$ and $(iii)$ hold. Then almost surely, 
either $\lim_{n\to \infty} \frac{R_n}{B_n}= \infty$, or $\lim_{n\to \infty}\frac{B_n}{R_n} =  \infty$. 
\end{lem}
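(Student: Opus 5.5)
We start from Lemma~\ref{lem:loc1}. It gives $\varepsilon>0$ such that almost surely, say, $\liminf R_n/B_n\ge 1+\varepsilon$; the other case is symmetric. So it suffices to show that on the event $\{R_n\ge (1+\varepsilon)B_n \text{ for all } n\ge m\}$, for each fixed $m$, we have $R_n/B_n\to\infty$ almost surely.

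The plan is a bootstrap on the ratio. Let $\rho_n=R_n/B_n$, and let $\gamma\ge 1+\varepsilon$ be a current lower bound on $\liminf\rho_n$. Write $R_n=\frac{\rho_n}{1+\rho_n}\tau_n$ and $B_n=\frac{1}{1+\rho_n}\tau_n$. Hypothesis~\ref{hypo} (iii), applied with $x=B_n$ and ratio $\rho_n\ge\gamma'$ for some $\gamma'$ slightly less than $\gamma$ but still $\ge 1+\varepsilon$, gives for $n$ large
$$p_n:=\frac{w(R_n)}{w(R_n)+w(B_n)}\ge \frac{\gamma'+\delta}{1+\gamma'+\delta}.$$
Here $\delta=\delta(\varepsilon)$ is uniform in $\gamma'$.

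By Chernoff's inequality and Hypothesis~\ref{hypo} (i), Borel--Cantelli shows that almost surely, for all $n$ large, $R_{n+1}-R_n\ge (p_n-\eta)\sigma_{n+1}$ for any fixed $\eta>0$. Summing these increments is the step where I expect the main difficulty. The fraction of red balls among the balls added after time $n$ is then asymptotically at least $\frac{\gamma'+\delta}{1+\gamma'+\delta}-\eta$. For this to determine the limiting ratio, the new draws must dominate the initial content, i.e.\ $\tau_n\to\infty$ with $\sum_{k>m}\sigma_k/\tau_n\to 1$, which holds since $\sigma_n\ge1$. Hence $\liminf\rho_n\ge\gamma'+\delta-\eta'$, where $\eta'$ can be taken as small as we like by choosing $\eta$ and $\gamma'$ close to $\gamma$.

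Taking $\eta'<\delta/2$, the lower bound improves from $\gamma$ to $\gamma+\delta/2$. Since $\delta$ depends only on $\varepsilon$ and not on $\gamma$, iterating countably many times (each step holding almost surely) gives $\liminf\rho_n\ge 1+\varepsilon+k\delta/2$ for every $k$, hence $\rho_n\to\infty$.

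The delicate point is the uniformity in Hypothesis~\ref{hypo} (iii): the $\liminf$ is over $x$ for fixed $\gamma$, while $\rho_n$ varies with $n$. I would handle this by using only the bound $\rho_n\ge\gamma'$ together with monotonicity of $w$, so that $w(R_n)\ge w(\gamma' B_n)$. This requires (iii) for only the single value $\gamma'$ at each stage, and $B_n\to\infty$ is not needed if $B_n$ stays bounded, since then $\rho_n\to\infty$ trivially. Chernoff error terms are summable by (i), and the Cesàro-type step only uses that the total number of balls added diverges. Note that only (i) and (iii) are really used here, with (ii) entering through Lemma~\ref{lem:loc1}.
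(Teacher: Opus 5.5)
Your proof is correct, and its skeleton is the paper's. Both arguments run an induction on $k$ that raises the $\liminf$ of the dominant ratio by $\delta/2$ at each stage. In both, $\delta$ comes from Hypothesis~\ref{hypo}~(iii) applied at one fixed ratio just below the current bound, with monotonicity of $w$ absorbing the fact that $\rho_n$ varies. Chernoff's inequality plus (i) controls the increments, and the case $B_\infty<\infty$ is trivial.

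The real difference is how you obtain an almost sure statement about all late increments. The paper reuses the restart scheme of Lemma~\ref{lem:loc1}. On $\{R_n\ge \gamma_1 B_n\}$ it bounds from below, by $\prod_j(1-e^{-\kappa\sigma_{j+1}})$, the probability that every later increment is at least $\frac{\gamma_1+\delta-\rho}{\gamma_1+\delta+1}\sigma_{j+1}$. It then uses the stopping times $s_k,t_k$ to show that some attempt eventually succeeds.

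You instead observe that, since $\sigma$ is deterministic, $\mathbb P\big(R_{n+1}-R_n<(p_n-\eta)\sigma_{n+1}\big)\le e^{-2\eta^2\sigma_{n+1}}$, which is summable by (i). The ordinary Borel--Cantelli lemma then gives the bound for all large $n$ almost surely, and the induction hypothesis already gives $\rho_n\ge\gamma'$ for all large $n$. This removes the restarts and the tuning of $\rho$, so your version is somewhat shorter.

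Two minor repairs are needed. First, at the base step $\gamma=1+\varepsilon$ there is no $\gamma'<\gamma$ with $\gamma'\ge 1+\varepsilon$, so take $\delta$ from (iii) with $\varepsilon/2$ in place of $\varepsilon$, as the paper does. Second, (iii) only gives $w(\gamma' x)/w(x)\ge \gamma'+\delta-\eta_1$ for large $x$, not $\ge\gamma'+\delta$; your $\eta'$ absorbs this error.
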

\begin{proof}
Let $\varepsilon$ be given by Lemma~\ref{lem:loc1}. By Hypothesis~\ref{hypo} (iii), there exists $\delta>0$, which we can always assume to be smaller than $\varepsilon$, such that for all $\gamma\ge 1+\frac{\varepsilon}{2}$, one has 
$\liminf_{x\to \infty}\frac{w(\gamma x)}{w(x)} \ge \gamma + \delta$. 
We will then show by induction on $k\ge 0$, that for any $k\ge 0$, almost surely, 
\begin{equation}\label{induction}
\liminf_{n\to \infty} \frac{R_n}{B_n}\ge 1+\varepsilon + \frac{k\delta}{2}, \quad \text{or} \quad \liminf_{n\to \infty} \frac{B_n}{R_n}\ge 1+\varepsilon + \frac{k\delta}{2},  
\end{equation}
which will prove the lemma.  The case $k=0$ is precisely given by Lemma~\ref{lem:loc1}, so we assume now that~\eqref{induction} holds for some $k$ and we prove it for $k+1$. Set 
$$\gamma_1 = 1+ \varepsilon + \frac{k\delta}{2}-\frac{\delta}{4}.$$
Since $\delta\le \varepsilon$ by definition, one has $\gamma_1\ge 1+ \frac{\varepsilon}{2}$, and thus there exists $x_0\ge 0$, such that for all $x\ge x_0$, 
$$\frac{w(\gamma_1 x)}{w(x)} \ge \gamma_1 + \delta.$$
Note furthermore that if $ \min(R_\infty,B_\infty)<\infty$, the result is immediate, hence one can assume that $R_\infty = B_\infty = \infty$. 
On this event, and using also the induction hypothesis, we deduce that 
$$n_0 : =  \inf \big\{ n\ge 0 : \max(R_n,B_n) \ge \gamma_1 \min(R_n,B_n) \text{ and } \min(R_n,B_n)  \ge x_0\big\} <\infty. $$ 
Moreover, as in the proof of Lemma~\ref{lem:loc1}, there exists a constant $\kappa=\kappa(\delta,k,\varepsilon)>0$, such that almost surely for any $n\ge n_0$, 
on the event $ R_n\ge \gamma_1 B_n$, 
\begin{equation}\label{induction.accroissements}
\mathbb P\Big(R_{j+1} - R_j \ge \frac{\gamma_1 + \delta-\rho}{\gamma_1 + \delta+1} \cdot \sigma_{j+1}, \ \forall j\ge n\ \Big|\ \mathcal F_n\Big)\ge \prod_{j\ge n} (1- e^{-\kappa \cdot\sigma_{j+1}}), 
\end{equation}
where $\rho$ is a constant to be fixed later, as a function of $\gamma_1$ and $\delta$, and $\mathcal F_n = \sigma(R_k,k\le n)$. 
The above product is larger than the constant $c$ defined by 
$$c:= \prod_{j\ge 0}  (1- e^{-\kappa \cdot\sigma_{j+1}}),$$
which is positive thanks to Hypothesis~\ref{hypo} (i). Then similarly as in the proof of Lemma~\ref{lem:loc1} we define two sequences of stopping times 
$(s_k)_{k\ge 0}$ and $(t_k)_{k\ge 0}$, by 
$s_0 = n_0$, and for $k\ge 0$, 
$$
t_k = \inf\{n\ge s_k : \max(R_n,B_n) \ge \gamma_1 \min(R_n,B_n)\}, 
$$ 
and 
\begin{equation*}
s_{k+1} = \left\{ 
\begin{array}{ll}
\inf\{n \ge t_k : R_{n+1} - R_n < \frac{\gamma_1 + \delta-\rho}{\gamma_1 + \delta+1} \cdot \sigma_{n+1}\} & \text{if }R_{t_k}\ge B_{t_k} \\
 \inf\{n \ge t_k : B_{n+1} - B_n < \frac{\gamma_1 + \delta-\rho}{\gamma_1 + \delta+1} \cdot \sigma_{n+1}\} & \text{else.} 
\end{array}
\right. 
\end{equation*}
By the induction hypothesis, we know that almost surely, for any $k\ge 0$, on the event when $s_k$ is finite, the time $t_k$ is also finite, and by~\eqref{induction.accroissements}, almost surely for any $k\ge 0$, on the event $s_k<\infty$, 
$$\mathbb P(s_{k+1} = \infty \mid \mathcal F_{s_k}) \ge c>0. $$  
As in the proof of Lemma~\ref{lem:loc1}, it follows that almost surely there exists $k\ge 0$, such that $s_k = \infty$, which proves that almost surely, 
$$\liminf_{n\to \infty} \frac{R_n}{B_n}\ge \frac{\gamma_1 + \delta - \rho}{1+ \rho}, \quad \text{or}\quad \liminf_{n\to \infty} \frac{B_n}{R_n}\ge \frac{\gamma_1 + \delta - \rho}{1+ \rho}. $$
One can then choose $\rho$ such that $\frac{\gamma_1 + \delta - \rho}{1+ \rho}\ge \gamma_1 + \frac{3\delta}{4}$, and this concludes the proof of the lemma.  
\end{proof}

We are now in position to prove our first main result in this section. 

\begin{proof}[Proof of Theorem~\ref{theo.GPUexpbis}]
Let $n\ge 0$ be fixed, and recall that $\mathcal F_n = \sigma(R_k,k\le n)$. 
Let 
$$A= \Big\{ \lim_{k\to \infty} \frac{R_k}{B_k} = \infty\Big\} \cup \Big\{\lim_{k\to \infty} \frac{B_k}{R_k} = \infty\Big\}.$$ 
By Lemma~\ref{lem:loc2}, one knows that $\mathbb P(A) = 1$, and hence also almost surely, $\mathbb P(A \mid \mathcal F_n) = 1$.  
This implies that  for any $K\ge 1$, almost surely,
\begin{equation}\label{limiteRinftyBinfty}
\mathbb P(R_\infty = B_\infty = \infty \mid \mathcal F_n) = \lim_{k\to \infty} \mathbb P\big(R_\infty = B_\infty = \infty, \, A_{k,K}\ \big|\ \mathcal F_n\big), 
\end{equation}
where 
$$A_{k,K} = \big\{ R_j \ge K B_j, \ \forall j\ge k \big\}\cup \big\{ B_j \ge K R_j, \ \forall j\ge k \big\}. $$ 
Now given any $k$, conditionally on $R_k$, and say on the event $\{R_k \ge KB_k\}$, we define the process $(R_j,B_j)_{j\ge k}$ using a modification of the continuous-time embedding from Section~\ref{sec:GPU2}. Namely, for the time-line of blue balls, we use the same construction, with a fixed stack of exponential random variables $(\xi_j^B)_{j\ge B_k}$. However, for the time-line of red balls we use different stacks at each step, namely, at step $j$, we use independent mean one exponential random variables $(\xi_{\ell}^{(j)})_{1\le \ell \le \sigma_{j+1}}$. In particular even if one of these clocks is engaged before the end of $j$-th step, but does not ring, we do not use the remaining time for the next step. It is not difficult to see that Lemma~\ref{lem:Rubin} remains valid under this modification of the process (using a similar argument). 
Moreover, by construction, on the event $\{R_\infty = B_\infty = \infty\}$, denoting by $T_\infty$ the total time duration of the process (which we do not need to assume  to be finite here), one has on one hand, almost surely,  
$$T_\infty \ge \sum_{j\ge B_k} \frac{\xi_j^B}{w(j)},$$
and on the other hand, almost surely, 
$$T_\infty \le \sum_{j\ge k} \frac{\sum_{\ell=1}^{\sigma_{j+1}} \xi_\ell^{(j)}}{w(R_j)}. $$ 
Therefore, one has the inclusion of events, with $\varepsilon = 1/K$, 
$$\{R_\infty = B_\infty = \infty\}\cap \{R_j\ge K B_j, \, \forall j\ge k\} \ \subset \left\{\sum_{j\ge k} \frac{\sum_{\ell=1}^{\sigma_{j+1}} \xi_\ell^{(j)}}{w((1-\varepsilon)\tau_j)} \ge 
\sum_{j\ge \varepsilon \tau_k} \frac{\xi_j^B}{w(j)}\right\}.$$
Consequently, letting 
$$Z_k^+(\varepsilon) = \sum_{j\ge k} \frac{\sum_{\ell=1}^{\sigma_{j+1}} \xi_\ell^{(j)}}{w((1-\varepsilon)\tau_j)}, \quad\text{and}\quad Z_k^-(\varepsilon) = \sum_{j\ge \varepsilon \tau_k} \frac{\xi_j^B}{w(j)},$$
one has almost surely for any $k$, on the event $R_k\ge K B_k$, 
$$\mathbb P\Big(R_\infty = B_\infty = \infty, \ R_j \ge K B_j, \ \forall j\ge k \ \Big|\ \mathcal F_k\Big) \le \mathbb P\big(Z_k^+(\varepsilon) \ge Z_k^-(\varepsilon)\big). $$ 
However, by Hypothesis~\ref{hypo} (iv), if $K$ is chosen large enough, one has for all $k$ large enough, 
$$\mathbb E[Z_k^+(\varepsilon)] = \sum_{j\ge k} \frac{\sigma_{j+1}}{w((1-\varepsilon)\tau_j)} \le \frac 14 \cdot \mathbb E[Z_k^-(\varepsilon)]. $$ 
Furthermore, by Markov's inequality, 
$$\mathbb P\big(Z_k^+(\varepsilon) < 2 \mathbb E[Z_k^+(\varepsilon)] \big) \ge 1/2, $$
and observing that 
$$\text{Var}(Z_k^-(\varepsilon)) = \sum_{j\ge \varepsilon \tau_k} \frac{1}{w(j)^2} \le \mathbb E[Z_k^-(\varepsilon)]^2, $$
we deduce using Paley-Zygmund inequality  that 
$$\mathbb P\Big(Z_k^-(\varepsilon) \ge \frac 12 \mathbb E[Z_k^-(\varepsilon) ]\Big) \ge 
\frac 14\cdot  \frac{\mathbb E[Z_k^-(\varepsilon)]^2}{\mathbb E[Z_k^-(\varepsilon) ]^2 +\text{Var}(Z_k^-(\varepsilon))} \ge \frac 1{8}. $$  
Altogether, using independence between $Z_k^+(\varepsilon)$ and $Z_k^-(\varepsilon)$, this yields for any $k$ large enough, 
$$\mathbb P\big(Z_k^+(\varepsilon) \ge Z_k^-(\varepsilon)\big) \le 1 - \frac 1{16}. $$ 
We deduce that for any $k$ large enough, almost surely on the event $R_k\ge K B_k$, 
$$\mathbb P\Big(R_\infty = B_\infty = \infty, \ R_j \ge K B_j, \ \forall j\ge k \ \Big|\ \mathcal F_k\Big) \le 1- \frac 1{16}. $$ 
By symmetry a similar result holds on the event $B_k \ge KR_k$, and thus almost surely 
$$\mathbb P\big(R_\infty = B_\infty = \infty, \, A_{k,K} \ \big|\ \mathcal F_k\big) \le 1- \frac 1{16}. $$ 
Together with~\eqref{limiteRinftyBinfty}, we get that almost surely, for any $n\ge 0$, 
$$\mathbb P(R_\infty = B_\infty = \infty \mid \mathcal F_n)  \le 1 - \frac 1{16}. $$ 
Letting $n$ go to infinity, and using martingale convergence results, we get that almost surely $R_\infty$ or $B_\infty$ is finite, as wanted. 
\end{proof}

We can now give a proof of the result highlighted in the introduction about the specific choice of weight function growing stretched exponentially fast, and drawing sequence as a power law. As we will see the result is not a direct application of Theorem~\ref{theo.GPUexpbis}, at least not for all choices of the parameters. 

\begin{proof}[Proof of Theorem~\ref{thm:GPUexp}]
We assume here that $w(n) = \exp(cn^\alpha)$, and $\sigma_n = n^\beta$, with $\beta> \frac{1-\alpha}{\alpha}$. 
Note that in this case Hypotheses~\ref{hypo} $(i)$, $(iii)$ and $(iv)$ are satisfied, but not necessarily $(ii)$. However, the latter condition was only used for proving Lemma~\ref{lem:loc1}, and in fact a careful look at the rest of the proof reveals that all one needs is to show that for some $\varepsilon>0$, almost surely 
\begin{equation}\label{newgoal}
\limsup_{n\to \infty} \frac{\max(R_n,B_n)}{\min(R_n,B_n)} \ge 1+\varepsilon. 
\end{equation} 
Indeed, this is the only input which is used in the induction argument in the proof of Lemma~\ref{lem:loc2}. 
Now as shown in the proof of Lemma~\ref{lem:loc1}, by the central limit theorem, one knows that almost surely infinitely often $\max(R_n,B_n) \ge \frac{\tau_n}{2} +\sqrt{\sigma_n}$. 
 Moreover, if say $R_n \ge \frac{\tau_n}{2} + \sqrt{\sigma_n}$, then letting $\varepsilon_n = \sqrt{\sigma_n} \cdot \tau_n^{\alpha-1}$, one has for $n$ large enough, 
$$ \frac{w(R_n)}{w(B_n) + w(R_n)} \ge \frac{1+\varepsilon_n}{2+\varepsilon_n}\ge \frac 12 + \frac{\min(\varepsilon_n, 1)}{6},$$ and thus basic properties of Binomial random variables ensure that there exists $c>0$, such that for all $n$ large enough, almost surely, on the event $R_n \ge \frac{\tau_n}{2} + \sqrt{\sigma_n}$, 
$$\mathbb P\big(R_{n+1}\ge \frac{\tau_{n+1}}2 +  \sigma_{n+1}\cdot \frac{\min(\varepsilon_n,1)}{6} \mid \mathcal F_n\big)\ge c. $$  
By a standard application of the conditional Borel-Cantelli lemma, this shows that almost surely infinitely often, 
$$R_{n+1} \ge  \frac{\tau_{n+1}}2 +  \sigma_{n+1}\cdot \frac{\min(\varepsilon_n,1)}{6}.$$

Note also that $\sigma_{n+1}\cdot \varepsilon_n \gtrsim n^\gamma\cdot \sqrt{\sigma_{n+1}}$, with $\gamma =  \beta -(1+\beta)(1- \alpha) > 0$, where the last inequality is thanks to the hypothesis $\beta> \frac{1-\alpha}{\alpha}$. Repeating the same argument as above a finite number of times, we can see that if $k\ge 1$ is the smallest integer such that 
$k\gamma + \beta/2 \ge \beta$, one has for some constant $c'>0$, almost surely, infinitely often,  
$$R_{n+k+1} \ge  \frac{\tau_{n+k+1}}{2} + c'\sigma_{n+k+1}. $$
Now using again the fact that $\beta> \frac{1-\alpha}{\alpha}$, it can be observed that for some constant $\varepsilon>0$, 
$$\liminf_{n\to \infty} \frac{w(\frac{\tau_n}{2} + c'\sigma_n) }{w(\frac{\tau_n}{2} ) }\ge 1+ \varepsilon.$$ 
Then we can follow the same argument as in the proof of Lemma~\ref{lem:loc1}, and deduce~\eqref{newgoal}. 
With this in hand, as explained above, the same proof as for Theorem~\ref{theo.GPUexpbis} applies, which concludes the proof of Theorem~\ref{thm:GPUexp}. 
\end{proof}

To conclude we present another set of hypotheses, which is slightly different from Hypothesis~\ref{hypo}, neither stronger nor weaker. The main reason for introducing them, is that they can be used to prove that some VRBRW restricted to three sites localizes almost surely on two sites, while it would not have been possible to prove this using the previous hypotheses; see the next section and Proposition~\ref{prop:appVRBRW}.  

\begin{Hypo}\label{hypo2}
\begin{itemize}
\item[$(i)$] The sequence $(\sigma_n)_{n\ge 1}$ is unbounded, or in other words  
$$\limsup_{n\to \infty} \sigma_n = \infty. $$  
\item[$(ii)$] $$\liminf_{n\to \infty} \frac{w( \frac{\tau_n}{2} + \sqrt{\sigma_n})}{w(\frac{\tau_n}{2})} >1. $$ 
\item[$(iii)$]  For any $\varepsilon>0$, there exists $n_0$, such that for all $n\ge n_0$,  
$$\sum_{i=n}^\infty \frac{\sigma_{i+1}}{w((\frac 12 +\varepsilon)\tau_i)} \le \frac 14 \sum_{i = \lfloor (\frac 12 - \varepsilon) \tau_n\rfloor }^\infty \frac 1{w(i)}. $$ 
\end{itemize}
\end{Hypo}
\begin{rem}\emph{
Note that $(i)$ is not restrictive, since the case of bounded sequences $(\sigma_n)_{n\ge 1}$ is handled by Theorem~\ref{thm:GPU}. Hypothesis $(ii)$ is the same as before, and $(iii)$ is stronger than Hypothesis~\ref{hypo} $(iv)$, since the condition must now be fulfilled for all $\varepsilon>0$. }
\end{rem}
\begin{theo}\label{thm.loc.bis}
Assume that Hypothesis~\ref{hypo2} $(i)-(iii)$ hold. Then 
$$\mathbb P(R_\infty<\infty \text{ or } B_\infty<\infty)= 1. $$ 
\end{theo}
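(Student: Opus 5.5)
The plan is to adapt the proof of Theorem~\ref{theo.GPUexpbis}. The difference is that we no longer have Hypothesis~\ref{hypo} (i) and (iii), so we cannot push the ratio $R_n/B_n$ to infinity as in Lemma~\ref{lem:loc2}. To compensate, Hypothesis~\ref{hypo2} (iii) is assumed for every $\varepsilon>0$. It should therefore suffice to show that, infinitely often, one colour leads by a fixed multiplicative margin $\frac12+\varepsilon$ versus $\frac12-\varepsilon$. We then run the same time-line comparison from such a time onward, without any need for the lead to grow.

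\textbf{Step 1: recurrent lead.} We show there is $\varepsilon>0$ such that almost surely, infinitely often, $\max(R_n,B_n)\ge(\frac12+\varepsilon)\tau_n$.
\begin{itemize}
\item By Hypothesis~\ref{hypo2} (i), there are infinitely many $n$ with $\sigma_{n+1}$ arbitrarily large.
\item By the central limit theorem, as in \eqref{stepn0}, with conditional probability at least $c>0$ the leading colour gains at least $\sqrt{\sigma_{n+1}}$ above $\tau_{n+1}/2$ at such a step.
\item Hypothesis~\ref{hypo2} (ii) then gives a drawing probability $\ge\frac12+\varepsilon'$. A Chernoff bound (now with large $\sigma$ along a subsequence, rather than using (i) of Hypothesis~\ref{hypo}) yields a linear lead $\varepsilon'\sigma$.
\end{itemize}
This step is delicate: a single large $\sigma_{n+1}$ only produces a lead of size $\varepsilon'\sigma_{n+1}$, which is of order $\tau_{n+1}$ only if $\sigma_{n+1}$ is comparable to $\tau_n$. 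I would try to avoid needing a proportional lead at all, as follows. Apply the time-line comparison with $\varepsilon$ replaced by $\varepsilon_n\to0$ slowly. Note that Hypothesis~\ref{hypo2} (iii) holds for every $\varepsilon$, so by monotonicity in $\varepsilon$ one can choose $\varepsilon_n\downarrow0$ with the inequality holding along $n$. It is then enough to obtain $R_n\ge(\frac12+\varepsilon_n)\tau_n$ infinitely often, which the CLT/Chernoff boost gives once $\sigma_{n+1}\varepsilon'\gg\varepsilon_n\tau_n$. Choosing $\varepsilon_n$ adapted to the subsequence of large $\sigma$ is the crux, and I expect it to be the main obstacle.

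\textbf{Step 2: time-line comparison.} Fix $k$ with $R_k\ge(\frac12+\varepsilon)\tau_k$, so $B_k\le(\frac12-\varepsilon)\tau_k$. Use the modified Rubin embedding of the proof of Theorem~\ref{theo.GPUexpbis}: a fixed stack $(\xi^B_j)$ for blue, and fresh stacks $(\xi^{(j)}_\ell)$ for red at each step. Consider the event $E_k$ that red stays above $(\frac12+\varepsilon)\tau_j$ for all $j\ge k$ and both colours are infinite. On $E_k$ we have
$$T_\infty\ge\sum_{j\ge(\frac12-\varepsilon)\tau_k}\frac{\xi^B_j}{w(j)}=:Z^-_k,\qquad T_\infty\le\sum_{j\ge k}\frac{\sum_\ell\xi^{(j)}_\ell}{w((\frac12+\varepsilon)\tau_j)}=:Z^+_k.$$
By Hypothesis~\ref{hypo2} (iii), $\mathbb E Z^+_k\le\frac14\mathbb E Z^-_k$. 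The Markov and Paley--Zygmund bounds, exactly as before, then give $\mathbb P(Z^+_k\ge Z^-_k)\le\frac{15}{16}$.

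\textbf{Step 3: handling the exit event.} On the event that red drops below $(\frac12+\varepsilon)\tau_j$ before blue fixates, we restart at the next time the lead is recovered, which exists by Step 1. We introduce stopping times $s_k$ (times of recovered lead) and $t_k$ (exit times) as in Lemma~\ref{lem:loc1}. At each $s_k$, conditionally, with probability $\ge\frac1{16}$ either blue fixates or no exit ever occurs. The second alternative, combined with $Z^+<Z^-$, forces $B_\infty<\infty$. So the conditional probability of fixation given $\mathcal F_{s_k}$ is at least $\frac1{16}$ infinitely often. Lévy's 0--1 law then gives $\mathbf 1_{\{R_\infty<\infty\text{ or }B_\infty<\infty\}}\ge\frac1{16}$ almost surely, which concludes.
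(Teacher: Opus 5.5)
Your Step 2 is correct and matches the paper's time-line comparison (with the margin $\frac12\pm\varepsilon$ in place of $K$). Steps 1 and 3, however, leave a genuine gap.

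\textbf{What Step 2 actually needs.} A lead \emph{infinitely often} is not enough. The event $E_k$ requires red to stay above $(\frac12+\varepsilon)\tau_j$ for \emph{all} $j\ge k$, so you need the lead to \emph{persist}. Your proposal gives no lower bound on the probability that no exit occurs.

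\textbf{Step 3 misreads Step 2.} From $E_k\subseteq\{Z^+_k\ge Z^-_k\}$ you only get $\mathbb P(E_k\mid\mathcal F_k)\le\frac{15}{16}$. That is, with probability at least $\frac1{16}$, \emph{an exit occurs} or some colour stays finite. This is not ``blue fixates or no exit''. Nothing bounds $\mathbb P(\{\text{no exit}\}\cap\{Z^+_k<Z^-_k\})$ from below. Even a bound $\mathbb P(\text{no exit}\mid\mathcal F_k)\ge c$ would only help if $c>\frac{15}{16}$. So your restart scheme does not give a conditional fixation probability $\ge\frac1{16}$ at the times $s_k$.

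The paper orders things differently. It first proves that almost surely $\liminf R_n/B_n\ge 1+\varepsilon$ or $\liminf B_n/R_n\ge 1+\varepsilon$. Hence the events ``no exit after $k$'' increase to a full-measure event. Then, as in \eqref{limiteRinftyBinfty}, $\mathbb P(R_\infty=B_\infty=\infty\mid\mathcal F_n)$ is the limit as $k\to\infty$ of $\mathbb P(R_\infty=B_\infty=\infty,\ \text{no exit after } k\mid\mathcal F_n)$. Each of these is $\le\frac{15}{16}$, and L\'evy's 0--1 law applied to $\{R_\infty=B_\infty=\infty\}$ concludes.

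\textbf{The missing idea for persistence.} It also removes your obstacle. The proportional lead is not produced by one large step (you are right that it cannot be). No $\varepsilon_n\to 0$ is needed either; the requirement $\varepsilon'\sigma_{n+1}\gg\varepsilon_n\tau_n$ is not implied by the hypotheses anyway. The argument runs as follows.
\begin{enumerate}
\item Use Hypothesis~\ref{hypo2} (i) only to find a time $t$ with $\varepsilon\sigma_{t+2}\ge\sqrt{\sigma_{t+2}}$.
\item A CLT step followed by a Chernoff step gives, with conditional probability bounded below, a lead of at least $\varepsilon\sigma_{t+2}$ over $\tau_{t+2}/2$.
\item Afterwards, as long as the lead over $\tau_i/2$ is at least $\varepsilon(\tau_i-\tau_{t+1})$, it exceeds $\sqrt{\sigma_i}$. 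This holds because $\sigma_i\le\tau_i-\tau_{t+1}$ and $\varepsilon(\sigma_{t+2}+k)\ge\max(\sqrt{\sigma_{t+2}},\sqrt k)$.
\item Hence Hypothesis~\ref{hypo2} (ii) keeps each draw red with probability $\ge\frac12+\frac43\varepsilon$. A coupling with a nearest-neighbour walk with that drift shows the lead stays above this linearly growing threshold forever with probability $c_3>0$.
\item Repeating at stopping times, as in Lemma~\ref{lem:loc1}, makes this happen eventually almost surely.
\end{enumerate}
Since the threshold grows like $\varepsilon\tau_i$, this yields the $\liminf$ statement. Your Step 2 then applies with a fixed $\varepsilon'>0$ via Hypothesis~\ref{hypo2} (iii).
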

\begin{proof}
The proof is similar to the proof of Theorem~\ref{thm:GPU}. We just need to be slightly more careful in the first step. As in the proof of Lemma~\ref{lem:loc1}, let us first 
consider $\varepsilon>0$, and $n_0\ge 1$, such that 
$$\frac{w( \frac{\tau_n}{2} + \sqrt{\sigma_n})}{w(\frac{\tau_n}{2})} \ge 1+ 8 \varepsilon, $$
for all $n\ge n_0$, and we assume without loss of generality that $\varepsilon\le 1/8$. Note that their existence is guaranteed by Hypothesis~\ref{hypo2} $(ii)$.  We then define two sequences of stopping times $(s_n)_{n\ge 0}$ and 
$(t_n)_{n\ge 0}$, by $s_0 = n_0$, and inductively, for all $n\ge 0$, 
$$t_n = \inf \{ k \ge s_n : \varepsilon\sigma_{k+2} \ge \sqrt{\sigma_{k+2}}\}, $$
and 
\begin{equation*}
s_{n+1} = \left\{ 
\begin{array}{ll} 
\inf \{n \ge t_n +2: R_n - R_{t_n+1} <(\frac 12 + \varepsilon)(\tau_n- \tau_{t_n+1}) \} & \text{if }R_{t_n} \ge B_{t_n} \\
\inf \{n \ge t_n +2: B_n - B_{t_n+1} <(\frac 12 + \varepsilon)(\tau_n- \tau_{t_n+1})\} & \text{else}. 
\end{array}
\right. 
\end{equation*}
Note that by Hypothesis~\ref{hypo2} $(i)$, for any $n\ge 0$, on the event when $s_n$ is finite, $t_n$ is also almost surely finite. 
Now assume that $s_n$, hence $t_n$ as well, is finite for some $n$. Assume also to fix ideas that $R_{t_n} \ge B_{t_n}$. Then using the central limit theorem, we know that for some constant $c_1>0$, almost surely, 
\begin{equation}\label{CLT}
\mathbb P\big(R_{t_n +1} \ge \frac{\tau_{t_n+1}}{2} + \sqrt{\sigma_{t_n +1}} \mid \mathcal F_{t_n}\big)\ge c_1. 
\end{equation}
Moreover, since by construction $t_n+1\ge n_0$, on the event in the probability above, one has, 
$$\frac{w(R_{t_n+1})}{w(R_{t_n+1}) + w(B_{t_n+1})} \ge \frac 12 + \frac{2 \varepsilon}{1+ 4\varepsilon}\ge \frac 12 + \frac 43 \varepsilon,$$
using our assumption that $\varepsilon\le 1/8$, for the last inequality. Now Chernoff's bound guarantees that for some constant $c_2>0$, on the event in the probability in~\eqref{CLT}, almost surely, 
$$ \mathbb P\big(R_{t_n +2} \ge \frac{\tau_{t_n+2}}{2} + \varepsilon \sigma_{t_n+2}  \mid \mathcal F_{t_n+1}\big)\ge c_2.$$  
Note that by definition of $t_n$, one has $\varepsilon \sigma_{t_n+2} \ge \sqrt{\sigma_{t_n+2}}$, and in fact also for any $k\ge 0$, 
$$\varepsilon (\sigma_{t_n+2} + k) \ge \max(\sqrt{\sigma_{t_n+2}},\sqrt{k}). $$  
Hence as long as $R_i \ge \frac{\tau_i}{2} + \varepsilon(\tau_i - \tau_{t_n+2})$, for $ i\ge t_n +2$, the process $(R_i)_{i\ge t_n +2}$ dominates the positions of a simple random walk with drift $(4/3)\varepsilon$ to the right, at the times $\tau_i - \tau_{t_n+2}$. In other words, letting $(S_j)_{j\ge 0}$ be the Markov process starting from $S_0 = R_{t_n +2}$, jumping only to nearest neighbors, and from $x$ to $x+1$ with probability $1/2 + (4/3)\varepsilon$, for all $x$, one can couple it with our urn process in a way that  
$$ \left\{ S_j - S_0\ge \varepsilon j,\,  \forall j\ge 0\right\} \subseteq \left\{ R_i \ge \frac{\tau_i}{2} + \varepsilon(\tau_i - \tau_{t_n+2}), \, \forall i\ge t_n+2 \right\}. $$  
Now it is well known that the event on the left above has a positive probability to happen, say $c_3>0$. Consequently, and to summarize, we just have shown that almost surely, for any $n\ge 0$,  
$$\mathbb P\big( s_{n+1} = \infty \mid \mathcal F_{s_n}\big) \ge c_1c_2c_3>0. $$ 
It follows that almost surely there exists $n\ge 0$, such that $s_n = \infty$, which also proves that almost surely, 
$$\liminf_{n\to \infty} \frac{R_n}{B_n} \ge 1+ \varepsilon, \quad \text{or} \quad \liminf_{n\to \infty} \frac{B_n}{R_n} \ge 1+ \varepsilon. $$ 
We conclude the proof of the theorem, exactly as for Theorem~\ref{theo.GPUexpbis}.
\end{proof}
%%%%%%%%%%%%%%%%%%%%%%%%%%%%%%

\subsection{Application to the VRBRW on three sites}\label{sec:VRBRW3sites} 
Here we investigate some consequences of the previous results for the VRBRW restricted to three sites. 
\begin{prop} \label{prop:VRBRW3sites}
Consider a VRBRW on $\{-1,0,1\}$. Then,  
$$\sum_{k\ge 1} \frac 1{w(k)}<\infty\ \Longleftrightarrow\  \mathbb P(Z_\infty(1)<\infty)>0.$$   
\end{prop}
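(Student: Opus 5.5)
The plan is to reduce the VRBRW on $\{-1,0,1\}$ to a GPU process and then apply Proposition~\ref{prop:GPU}. On three sites the walk alternates deterministically: every vertex at even generation sits at $0$, and every vertex at odd generation sits at $\pm1$. Its value $\pm1$ is chosen independently, with probability proportional to $w(Z_n(\pm1))$.

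The first step is to set up the correspondence. Let $R_k$ and $B_k$ be the local times at $1$ and $-1$ after the $(2k-1)$-th generation, i.e. $Z_{2k-1}(1)$ and $Z_{2k-1}(-1)$. Conditionally on the tree, this is a GPU process with $R_0=B_0=0$ and drawing sequence $\sigma_k=\mathcal Z_{2k-1}$. Between odd generations only site $0$ gains local time, so the weights used at generation $2k+1$ are exactly $w(R_k),w(B_k)$. The sequence $\sigma_k$ is random, but the tree is independent of the choice of positions given the tree, so we work quenched. Then $Z_\infty(1)=R_\infty$, and in the GPU $\tau_k=\sum_{j\le k}\mathcal Z_{2j-1}$ is the number of odd-generation vertices up to generation $2k-1$.

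For the direction ($\Leftarrow$), assume $\sum 1/w(k)=\infty$. I will show that almost surely (quenched) $\sum_n \sigma_n/w(\tau_{n-1})=\infty$, and then invoke Proposition~\ref{prop:GPU}(ii) to get $R_\infty=\infty$ a.s. Since $w$ is non-decreasing, $\sum_n \sigma_n/w(\tau_{n-1})\ge\sum_n\sum_{k=\tau_{n-1}}^{\tau_n-1}\frac{1}{w(k)}\cdot\frac{\sigma_n}{\sigma_n}$ only if $\tau_n\le$ something; a direct comparison gives $\sigma_n/w(\tau_{n-1})\ge \sum_{k=\tau_{n-1}}^{\tau_n-1}1/w(k)$, because every term in the block is at most $1/w(\tau_{n-1})$ and the block has $\sigma_n$ terms. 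Summing over $n$ gives $\sum_{k\ge\tau_0}1/w(k)=\infty$, with no growth condition on $\sigma$ needed. The small nuisance is that $\tau_0=0$ and $w(0)>0$, so this is fine.

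For the direction ($\Rightarrow$), assume $\sum 1/w(k)<\infty$, and aim at Proposition~\ref{prop:GPU}(i) with a positive starting number of blue balls, run after some finite time. With positive probability the first odd generation has a single vertex; I will condition on the event that the first draw is blue, so that $B_1=1$ and $R_1=0$. I then need, with positive annealed probability, $\sum_n \sigma_{n+1}/w(\tau_n-1)<\infty$, i.e. $\sum_n \mathcal Z_{2n+1}/w(\tau_n-1)<\infty$, and this is the main obstacle, since $\mathcal Z$ fluctuates.

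My approach to the obstacle is to use growth of the tree. By Lemma~\ref{lem:esp}, $\mathbb E[\mathcal Z_{2n+1}]\le 1+(2n+1)\sigma^2$. Also, $\tau_n\ge n$ deterministically, because every generation has at least the spine vertex. A deterministic lower bound $\tau_n\ge n$ with $w(n-1)$ does not directly give summability of $\sum n/w(n)$, so a better lower bound on $\tau_n$ is needed. For this I use the estimates in the proof of Lemma~\ref{lem:ratiolim}: almost surely $\mathcal Z_n\le 2n^{3/2}\log n$ and $|\mathcal T_n|\ge cn^{8/5}$ eventually. The odd-generation count is at least a constant fraction of $|\mathcal T_{2n}|$, by the same argument restricted to odd generations, or simply from $|\mathcal T_{2n}|\le 2\tau_n+$ even part bounded via $\mathcal Z$ growth. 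Combining these, $\sigma_{n+1}\le \tau_n$ eventually, by Lemma~\ref{lem:ratiolim}. Since $w$ is monotone, $\sigma_{n+1}/w(\tau_n-1)\le \sum_{k=\tau_n-\sigma_{n+1}}^{\tau_n-1}\frac{1}{w(k-1)}\cdot$ (overlap factor). More cleanly, the blocks $[\tau_{n}-\sigma_{n+1},\tau_n)$ overlap at most boundedly often once $\sigma_{n+1}\le\varepsilon\tau_n$. Hence the sum is bounded by a constant times $\sum_k 1/w(k-1)<\infty$ a.s. Then Proposition~\ref{prop:GPU}(i), applied from the step where $B=1$ (quenched, on the tree), gives positive probability that no further blue is drawn. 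By the symmetry red/blue, $Z_\infty(1)<\infty$ with positive probability.
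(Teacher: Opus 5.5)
Your quenched reduction (conditionally on $\mathcal T_\infty$, a GPU with $R_0=B_0=0$ and $\sigma_k=\mathcal Z_{2k-1}$) matches the paper. So does your treatment of the case $\sum_k 1/w(k)=\infty$: the block bound $\sigma_n/w(\tau_{n-1})\ge\sum_{k=\tau_{n-1}}^{\tau_n-1}1/w(k)$ followed by Proposition~\ref{prop:GPU}(ii).

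The other direction is not established as written. Its decisive step is the claim that the blocks $[\tau_n-\sigma_{n+1},\tau_n)$ overlap only boundedly often once $\sigma_{n+1}\le\varepsilon\tau_n$. This is false. Take $\tau_n=k+1$ and $\sigma_{n+i+1}=2^i$ for $i=0,1,2,\dots$. Then $\tau_{n+i}=k+2^i$ and the $i$-th block is $[k,k+2^i)$. The constraint $\sigma_{n+i+1}\le\varepsilon\tau_{n+i}$ holds as long as $(1-\varepsilon)2^i\le\varepsilon k$, so the point $k$ is covered of order $\log(\varepsilon k)$ times. Nothing in your argument excludes such runs in Kesten's tree.

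The inequality you want does hold under $\sigma_{n+1}\le C\tau_n$ for $n\ge n_0$, but by Abel summation rather than overlap counting. Write $1/w(x)=\sum_{s>x}a_s$ with $a_s=1/w(s-1)-1/w(s)\ge0$. Then
$$\sum_{n\ge n_0}\frac{\sigma_{n+1}}{w(\tau_n-1)}=\sum_{s\ge1}a_s\sum_{n\ge n_0:\,\tau_n\le s}\sigma_{n+1}\le(1+C)\sum_{s\ge1}s\,a_s=(1+C)\sum_{k\ge0}\frac1{w(k)},$$
since the inner sum is at most $\tau_{m+1}\le(1+C)\tau_m\le(1+C)s$, where $m$ is the last index with $\tau_m\le s$.

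Two further points need attention:
\begin{itemize}
\item The input $\mathcal Z_{2n+1}=O(\tau_n)$ is only sketched. Lemma~\ref{lem:ratiolim} compares $\mathcal Z_{2n+1}$ with $|\mathcal T_{2n+1}|$, which contains the even generations. You would have to redo its Paley--Zygmund lower bound for odd generations alone; ``even part bounded via $\mathcal Z$ growth'' is not an argument.
\item The event $\{\mathcal Z_1=1\}$ may be null, since $\widetilde\mu(1)=\mu(1)$ (take $\mu=\frac12(\delta_0+\delta_2)$). This conditioning is unnecessary anyway, because Proposition~\ref{prop:GPU}(i) allows $B_0=0$.
\end{itemize}

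The idea you are missing makes all the growth estimates superfluous: condition on the past instead of using the unconditional bound $\mathbb E[\mathcal Z_{2n+1}]\le 1+(2n+1)\sigma^2$. The sum $\tau_n=\mathcal Z_1+\mathcal Z_3+\dots+\mathcal Z_{2n-1}$ is $\mathcal T_{2n-1}$-measurable, and $\mathbb E[\mathcal Z_{2n+1}\mid\mathcal T_{2n-1}]=\mathcal Z_{2n-1}+2\sigma^2$, as in the proof of Lemma~\ref{lem:esp}. Hence
$$\mathbb E\Big[\sum_{n\ge1}\frac{\mathcal Z_{2n+1}}{w(\tau_n)}\Big]=\mathbb E\Big[\sum_{n\ge1}\frac{\mathcal Z_{2n-1}+2\sigma^2}{w(\tau_n)}\Big]\le\sum_{k\ge0}\frac1{w(k)}+2\sigma^2\sum_{n\ge1}\frac1{w(n)}<\infty.$$
The first term uses the disjoint blocks $(\tau_{n-1},\tau_n]$ of length $\mathcal Z_{2n-1}$; the second uses $\tau_n\ge n$.

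Thus $\sum_{n\ge1}\sigma_n/w(\tau_{n-1})<\infty$ almost surely. Proposition~\ref{prop:GPU}(i) with $B_0=0$ then gives, for almost every tree and hence after integration, positive probability that one of the sites $\pm1$ is never visited. By symmetry, $\mathbb P(Z_\infty(1)<\infty)>0$. This is the paper's argument. Your unconditional expectation discarded exactly what makes it work: pairing $\mathcal Z_{2n+1}$ with $\mathcal Z_{2n-1}$, which already sits inside $\tau_n$.
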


\begin{proof}
Note that by identifying the local times at $-1$ and $1$ with the number of red and blue balls respectively, one can see that the VRBRW on $\{-1,0,1\}$ is equivalent to a GPU process, where for any $n\ge 1$, the number of draws at step $n$ is random and equal to  $\mathcal Z_{2n-1}$, and where initially there are no ball of each color.  Moreover, the fact that $w$ is nondecreasing ensures that for any sequence $(\sigma_n)_{n\ge 1}$, letting $\tau_n = \sigma_1+\dots + \sigma_n$, one has 
$$\sum_{n\ge 1}\frac{\sigma_n}{w(\tau_n)} \le \sum_{k=1}^\infty \frac 1{w(k)}. $$ 
Hence, if $\sum_{k\ge 0} \frac 1{w(k)}<\infty$, then using also Lemma~\ref{lem:esp}, we get 
$$\mathbb E\Big[ \sum_{n\ge 1}\frac{\mathcal Z_{2n+1}}{w(\mathcal Z_1+\mathcal Z_3+ \dots + \mathcal Z_{2n-1} )}\Big] =
\mathbb E\Big[ \sum_{n\ge 1}\frac{\mathcal Z_{2n-1}+2 \sigma^2}{w(\mathcal Z_1+\mathcal Z_3+ \dots + \mathcal Z_{2n-1} )}\Big]   <\infty,$$ 
which entails that almost surely, 
$$\sum_{n\ge 1}\frac{\mathcal Z_{2n+1}}{w(\mathcal Z_1+\mathcal Z_3+ \dots + \mathcal Z_{2n-1} )}<\infty. $$ 
Then Proposition~\ref{prop:GPU} (i) implies that almost surely, conditionally on $\mathcal T_\infty$, the probability that only one color is drawn infinitely often is positive. Integrating with respect to $\mathcal T_\infty$ shows that $\mathbb P(Z_\infty(-1)<\infty) >0$.

Assume now that $\sum_{k\ge 0} \frac 1{w(k)}= \infty$. Then note that for any sequence $(\sigma_n)_{n\ge 0}$, 
$$\sum_{n\ge 1}\frac{\sigma_n}{w(\tau_{n-1})} \ge \sum_{k\ge \sigma_1} \frac 1{w(k)}  = \infty,$$ 
and thus applying this time Proposition~\ref{prop:GPU} (ii) gives that almost surely, conditionally on $\mathcal T_\infty$, both colors are picked infinitely often with probability one, which after integration with respect to $\mathcal T_\infty$ gives that $\mathbb P(Z_\infty(-1) = Z_\infty(1) = \infty) = 1$. 
\end{proof}

In the next result, we consider a very specific choice of weight function. Our goal is not to be exhaustive, but simply 
to exhibit an example where almost sure localization on 
two sites can be proved for a VRBRW restricted to three sites. However, even for this specific example, the extension of the result to the model of the VRBRW on $\mathbb Z$ seems challenging. 
\begin{prop}\label{prop:appVRBRW}
Assume that $w(n) = \exp(cn)$, for some constant $c>0$, and consider a VRBRW on $\{-1,0,1\}$. Then  
$$\mathbb  P(Z_\infty(-1)= Z_\infty(1) = \infty)= 0. $$ 
\end{prop}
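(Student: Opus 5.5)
The plan is to use the identification from the proof of Proposition~\ref{prop:VRBRW3sites}. Conditionally on $\mathcal T_\infty$, the VRBRW on $\{-1,0,1\}$ is a GPU process with weight $w(n)=e^{cn}$, no initial balls, and draws $\sigma_n=\mathcal Z_{2n-1}$. It therefore suffices to check that, for almost every realization of $\mathcal T_\infty$, the deterministic sequence $(\sigma_n)$ satisfies Hypothesis~\ref{hypo2}. Theorem~\ref{thm.loc.bis} then gives fixation conditionally on the tree, and we integrate over $\mathcal T_\infty$. The initial condition $R_0=B_0=0$ is harmless, since Theorem~\ref{thm.loc.bis} does not depend on it. We use Hypothesis~\ref{hypo2} rather than Hypothesis~\ref{hypo}, because $\mathcal Z_n$ returns to small values (e.g.\ $1$) infinitely often, so Hypothesis~\ref{hypo}~$(i)$ fails.

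\textbf{Hypothesis~\ref{hypo2}~$(i)$.} We need $\limsup \mathcal Z_{2n-1}=\infty$ almost surely. By Lemma~\ref{lem:esp}, $\mathbb E[\mathcal Z_n]=1+n\sigma^2\to\infty$. More directly, the normal children of spine vertices have independent critical subtrees, and infinitely many of these subtrees reach any prescribed size at an odd generation, by Borel--Cantelli applied to independent events of positive probability.

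\textbf{Hypothesis~\ref{hypo2}~$(ii)$.} This is immediate for exponential weights: $w(\tau_n/2+\sqrt{\sigma_n})/w(\tau_n/2)=e^{c\sqrt{\sigma_n}}\ge e^c>1$, because $\sigma_n\ge 1$.

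\textbf{Hypothesis~\ref{hypo2}~$(iii)$.} This is the main obstacle, and it is where Lemma~\ref{lem:ratiolim} enters. The right-hand side is
\[
\sum_{i\ge \lfloor(\frac12-\varepsilon)\tau_n\rfloor} e^{-ci}\asymp e^{-c(\frac12-\varepsilon)\tau_n}.
\]
For the left-hand side, write $\tau_i=\tau_n+(\tau_i-\tau_n)$ and bound $\sigma_{i+1}\le \tau_{i+1}-\tau_i$. This gives
\[
\sum_{i\ge n}\sigma_{i+1}e^{-c(\frac12+\varepsilon)\tau_i}
\le e^{-c(\frac12+\varepsilon)\tau_n}\sum_{i\ge n}\sigma_{i+1}e^{-c(\frac12+\varepsilon)(\tau_i-\tau_n)}.
\]
The difficulty is the term $i=n$, which contributes $\sigma_{n+1}e^{-c(\frac12+\varepsilon)\tau_n}$. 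It is enough to show that almost surely $\sigma_{n+1}=o(\tau_n)$. Then $\sigma_{n+1}\le e^{c\varepsilon\tau_n}/8$ for large $n$, and the terms with $i>n$ are handled the same way using $\tau_i-\tau_n\ge \sigma_{n+1}+\dots$ and $\sigma_{i+1}=o(\tau_i)$. In that case the whole left side is at most $C e^{-c(\frac12+\varepsilon)\tau_n}\,e^{c\varepsilon\tau_n/2}$, which is much smaller than $\frac14 e^{-c(\frac12-\varepsilon)\tau_n}$. In fact even a polynomial bound $\sigma_{n+1}\le \tau_n^{C}$ would suffice, thanks to the exponential gap $e^{-2c\varepsilon\tau_n}$.

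The key input is therefore $\mathcal Z_{2n+1}/(\mathcal Z_1+\mathcal Z_3+\dots+\mathcal Z_{2n-1})\to 0$ almost surely. Lemma~\ref{lem:ratiolim} gives $\mathcal Z_m/|\mathcal T_m|\to0$, but it involves all generations, not only odd ones. To pass to odd generations, its proof shows $\mathcal Z_m\le 2m^{3/2}\log m$ eventually and $|\mathcal T_m|\ge c\,m^{8/5}$ eventually. The same lower-bound argument, with only odd generations counted in the subtrees $\mathcal T^{(k,i)}$ (about half of the $\sim (n-k)$ expected mass), gives $\sum_{j\le n}\mathcal Z_{2j-1}\ge c\,n^{8/5}$ eventually. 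A cruder alternative is to compare $\mathcal Z_{2j}$ with $\mathcal Z_{2j\pm1}$. With this in hand, Hypothesis~\ref{hypo2}~$(iii)$ holds for every $\varepsilon>0$ almost surely, and the proposition follows from Theorem~\ref{thm.loc.bis}.
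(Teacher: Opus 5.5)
Your proof is correct and has the same skeleton as the paper's: condition on $\mathcal T_\infty$, check Hypothesis~\ref{hypo2} almost surely for $\sigma_n=\mathcal Z_{2n-1}$, and apply Theorem~\ref{thm.loc.bis}. The differences are in items $(i)$ and $(iii)$.

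For $(i)$, the paper invokes irreducibility of the Markov chain $(\mathcal Z_{2n-1})_{n\ge 1}$, while you use i.i.d.\ subtrees hanging off the spine together with Borel--Cantelli. Both work.

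For $(iii)$, the paper first shows that $\mathcal Z_{2k+1}\le k^2\mathcal Z_{2k-1}$ eventually, using Lemma~\ref{lem:esp}, Markov's inequality and conditional Borel--Cantelli. It then absorbs $k^2$ into $e^{c\varepsilon\tau_k}$ and telescopes, bounding the left-hand side by a constant times $e^{-c\tau_{n-1}/2}$. This loses the last increment $\tau_n-\tau_{n-1}$, so the paper needs $\mathcal Z_{2n-1}=o(\tau_n)$. It attributes this to Lemma~\ref{lem:ratiolim}, even though that lemma compares $\mathcal Z_m$ with all generations rather than only the odd ones; this is the point you noticed.

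Your direct estimate instead uses $\tau_i\ge \tau_n+(i-n)$ and keeps a bound of order $e^{-c(\frac12+\frac{\varepsilon}{2})\tau_n}$. The gap $e^{-2c\varepsilon\tau_n}$ between the two sides absorbs any subexponential factor $\sigma_{i+1}$. So, as you observe yourself, a crude bound is enough. Since $\mathbb E[\mathcal Z_m]=1+m\sigma^2$, one has $\sum_m \mathbb P(\mathcal Z_m\ge m^3)<\infty$. Hence, almost surely, $\sigma_{n+1}=\mathcal Z_{2n+1}\le (2n+1)^3\le (2\tau_n+1)^3$ for all large $n$, using $\tau_n\ge n$. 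This makes your route more elementary than the paper's: it needs only the first-moment information of Lemma~\ref{lem:esp}, and the delicate lower bound on $|\mathcal T_n|$ from Lemma~\ref{lem:ratiolim} is not needed at all.

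You should make this polynomial bound the key input and drop your last paragraph. The odd-generation version of Lemma~\ref{lem:ratiolim} is superfluous, and the ``cruder alternative'' of comparing $\mathcal Z_{2j}$ with $\mathcal Z_{2j\pm 1}$ is not justified as written.

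Finally, your stated reason for avoiding Hypothesis~\ref{hypo} is inaccurate. By size-biasing, $\mathbb P(\mathcal Z_n=1)$ equals the probability that a critical BGW process has exactly one individual at generation $n$. That probability is $f_n'(0)=\prod_{k<n}f'(f_k(0))=n^{-2+o(1)}$, where $f$ is the generating function of $\mu$. So $\mathcal Z_n=1$ only finitely often almost surely. This remark plays no role in your argument.
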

\begin{proof}
We just need to prove that all the conditions of Hypothesis~\ref{hypo2} are satisfied almost surely with $\sigma_n = \mathcal Z_{2n-1}$. Item $(i)$ follows from the fact that $(\mathcal Z_{2n-1})_{n\ge 1}$ is an irreducible Markov chain on (a possibly sub-semigroup of) the set of nonnegative integers. Item $(ii)$ is always true for weights of the form $w(n) = \exp(cn)$. Hence it just amounts to prove $(iii)$ now. Fix $\varepsilon>0$. For $k\ge 1$, let $\tau_k = \mathcal Z_1 + \mathcal Z_3 + \dots + \mathcal Z_{2k-1}$, and $\mathcal F_k = \sigma(R_1,\dots,R_k,\tau_1,\dots,\tau_k)$. By Lemma~\ref{lem:esp}, and Markov's inequality, one has almost surely 
$$\sum_{k\ge 1} \mathbb P(\mathcal Z_{2k+1} \ge k^2 \mathcal Z_{2k-1}\mid \mathcal F_k) < \infty. $$ 
Hence by the conditional Borel-Cantelli lemma, almost surely, $\mathcal Z_{2k+1} \le k^2 \mathcal Z_{2k-1}$, for all $k$ large enough. 
Now by definition, one has $\tau_n \ge n$, and thus almost surely for $n$ large enough, 
 \begin{align*}
\sum_{k\ge n} \frac{\mathcal Z_{2k+1}}{w((\frac 12 + \varepsilon)\tau_k)} & \le \sum_{k\ge n} \frac{k^2\mathcal Z_{2k-1}}{w((\frac 12 + \varepsilon)\tau_k)} 
\le \sum_{k\ge n} \frac{\mathcal Z_{2k-1}}{\exp(c\frac{\tau_k}{2})} 
 \le  \sum_{i\ge \tau_{n-1}} \exp(-ci/2)  \\
 & = \frac{1}{1- e^{-c/2}}\cdot \exp\big(-c\frac{ \tau_{n-1}}{2}\big).   
\end{align*}
On the other hand (forgetting the integer parts), 
$$\sum_{i\ge (\frac 12 - \varepsilon)\tau_n} \frac 1{w(i)} = \frac 1{1- e^{-c}} \exp\big(-c(\frac 12- \varepsilon)\tau_n\big). $$
Then the result follows from Lemma~\ref{lem:ratiolim} and again the fact that $\tau_n\ge n$.  
\end{proof}

%%%%%%%%%%%%%%%%%%%%%%%%%%%%%%%%%%%%%%%%%%%%%%%%%%%%%%%%%%%%%%%%%%%%%%%%

\section{Proof of Proposition~\ref{prop.strong}}\label{sec:propstrong}
We assume here that $\sum_{n\ge 0} \frac 1{w(n)}<\infty$. Then we first prove that the process almost surely visits a finite number of sites. To this end, let us consider $(\xi^{(i)}_n(x))_{i\ge 1, n\ge 0, x\in \mathbb Z}$ and 
$(\widetilde \xi_n(x))_{n\ge 0,x\in \mathbb Z}$  two sequences of independent random variables with law $\mu$ and $\widetilde \mu$ respectively, representing the number of children of the particles present at site $x$ at time $n$, either normal for the former sequence or special for the latter. Let also $\Delta Z_n(x) = Z_n(x) - Z_{n-1}(x)$ be the number of particles present at site $x$ at time $n$. 
Observe that each time the process visits a new site $x\ge 1$, say at a time $\tau_1\ge 1$, one has 
$$Z_{\tau_1}(x) \le \widetilde \xi_{\tau_1-1}(x-1) + \sum_{i=1}^{\Delta Z_{\tau_1-1}(x-1)} \xi^{(i)}_{\tau_1-1}(x-1),$$
since there is at most one special vertex per generation.  
Similarly, on the event that $x+1$ has not yet been visited by the $n$-th time, say $\tau_n$, at which $x$ is visited by at least one particle, the number of particles that jump to $x$ at that time, $\Delta Z_{\tau_n}(x)$, satisfies 
$$\Delta Z_{\tau_n}(x) \le \widetilde \xi_{\tau_n-1}(x-1) + \sum_{i=1}^{\Delta Z_{\tau_n - 1}(x-1)} \xi^{(i)}_{\tau_n-1}(x-1). $$ 
Then conditionally on the past up to time $\tau_n$, and on the event that $x+1$ has not yet been visited by that time, the probability that it remains not visited at time $\tau_n+1$ equals 
\begin{align*}
\mathbb P(Z_{\tau_n +1}(x+1) = 0\mid \mathcal F_{\tau_n})&  = \mathbb E\Big[\Big(1- \frac{w(0)}{w(0) + w(Z_{\tau_n-1}(x-1))}\Big)^{\Delta Z_{\tau_n}(x)} \mid \mathcal F_{\tau_n}\Big] \\
& \ge  1 - \frac{w(0)\cdot \mathbb E\Big[\Delta Z_{\tau_n}(x) \mid \mathcal F_{\tau_n}\Big]}{w(0) + w(Z_{\tau_n-1}(x-1))}\\ 
& \ge  1 - \frac{w(0)\cdot (\Delta Z_{\tau_n-1}(x-1)+ 1+\sigma^2) }{w(0) + w(Z_{\tau_n-1}(x-1))},
\end{align*}
using that the mean of the size biased distribution is $1+\sigma^2$. Now observe that for any sequence $(\sigma_n)_{n\ge 1}$ and $(\tau_n)_{n\ge 0}$ for which $\sigma_n \le \tau_n - \tau_{n-1}$ for all $n\ge 1$, one has 
$$\sum_{n\ge 1} \frac{\sigma_n}{w(\tau_n)} \le \sum_{k=0}^\infty \frac 1{w(k)}<\infty. $$ 
Hence almost surely, conditionally on the past before the first visit to site $x\ge 1$, the probability that $x+1$ is never visited is uniformly bounded from below by a positive constant. It follows that almost surely $\sup \mathcal R'<\infty$. By symmetry we also have that almost surely $\inf \mathcal R'>-\infty$, whence the fact that almost surely $|\mathcal R'|$ is finite.

Now, regarding the localization on two sites with positive probability, one can use the same arguments as in the proofs of Propositions~\ref{prop:VRBRW3sites} and~\ref{prop:GPU} (i), which show that 
$\mathbb P(\mathcal R = \{0,1\})>0$. 
This concludes the proof of the proposition. 

\vspace{0.2cm}
\textbf{Acknowledgements:} Work supported by the grant ANR-22-CE40-0012 (project LOCAL).

\end{document}